\documentclass[review,sort&compress,3p]{elsarticle}
\usepackage{lineno,hyperref}
\usepackage{graphicx}
\usepackage{amsmath, amssymb, amsfonts, amsthm}
\usepackage{mathrsfs}
\usepackage{url}
\usepackage{latexsym}
\usepackage[ruled,vlined]{algorithm2e}
\usepackage{color}
\usepackage{subfigure}
\usepackage{multirow}
\usepackage{epstopdf}
\usepackage{float}
\usepackage{siunitx}
\usepackage{caption}
\definecolor{darkblue}{rgb}{0,0,0.4}
\newtheorem{thm}{Theorem}[section]

\newtheorem{lem}[thm]{Lemma}
\theoremstyle{remark}
\newtheorem{remark}[thm]{Remark}

\newcommand{\ie}{{\it i.e.}}

\graphicspath{{./figs/}}

\bibliographystyle{siam}
\journal{}

\begin{document}
	
	\begin{frontmatter}
		
		\title{A robust and stable phase field method for structural topology optimization}

		\author[a]{Huangxin Chen}
		\ead{chx@xmu.edu.cn}
		
		\author[a]{Piaopiao Dong}
		\ead{dongpiaopiao@stu.xmu.edu.cn}
		
		\address[a]{School of Mathematical Sciences and Fujian Provincial Key Laboratory on Mathematical Modeling and High Performance Scientific Computing, Xiamen University, Fujian, 361005, China}
		
		\author[b,c]{Dong Wang}
		\ead{wangdong@cuhk.edu.cn}
		
		\author[b,c]{Xiao-Ping Wang}
		\ead{wangxiaoping@cuhk.edu.cn}
		
		\address[b]{School of Science and Engineering, The Chinese University of Hong Kong, Shenzhen, Guangdong 518172, China}
		\address[c]{Shenzhen International Center for Industrial and Applied Mathematics, Shenzhen Research Institute of Big Data, Guangdong 518172, China}

		\begin{abstract}
			This paper presents a novel phase-field-based methodology for solving minimum compliance problems in topology optimization under fixed external loads and body forces. The proposed framework characterizes the optimal structure through an order parameter function, analogous to phase-field models in materials science, where the design domain and its boundary are intrinsically represented by the order parameter function. The topology optimization problem is reformulated as a constrained minimization problem with respect to this order parameter, requiring simultaneous satisfaction of three critical properties: bound preservation, volume conservation, and monotonic objective functional decay throughout the optimization process. The principal mathematical challenge arises from handling domain-dependent body forces, which necessitates the development of a constrained optimization framework. To address this, we develop an operator-splitting algorithm incorporating Lagrange multipliers, enhanced by a novel limiter mechanism. This hybrid approach guarantees strict bound preservation, exact volume conservation, and correct objective functional  decaying rate.  Numerical implementation demonstrates the scheme's robustness through comprehensive 2D and 3D benchmarks. 
		\end{abstract}
		
		\begin{keyword}
			Structural topology optimization, Phase field, Lagrange multipliers, Limiter
		\end{keyword}
		
	\end{frontmatter}
	

	\section{Introduction}
	
	Topology optimization represents a class of mathematical optimization techniques that determine the optimal material distribution within a prescribed design domain to achieve target performance metrics while satisfying physical constraints. With the rapid advancement of computational capabilities and manufacturing technologies, these methods have found widespread applications across multiple engineering disciplines \cite{bendsoe2003topology}. Among various formulations, the minimum compliance problem in structural topology optimization has attracted particular research attention due to its fundamental importance in mechanical design \cite{XU201338}. Current topology optimization approaches can be broadly categorized into several paradigms, such as the Solid Isotropic Material with Penalization (SIMP) approach \cite{bendsoe2003topology,bruyneel2005note,XU201338}, which employs power-law material interpolation with density filtering,  topological derivatives \cite{novotny2012topological},  level-set approaches \cite{tan2023discontinuous},  the evolutionary structural optimization method \cite{xie1993simple,jiao2012new,jia2020new}, the phase field method \cite{li2022provably,  li2022unconditionally,xie2023effective,xia2023modified,JIN2024112932}, and several others \cite{CAI2020112778,wang2006radial}.

	The phase field method, originally developed for modeling phase transitions in materials science \cite{ALLEN1976425,RN216}, has emerged as a powerful framework for topology optimization. This approach characterizes material distributions through an order parameter $\phi(\mathbf{x})$ that smoothly transitions between solid ($\phi=1$) and void ($\phi=0$) regions, with the interface evolution governed by either the Allen-Cahn or Cahn-Hilliard dynamics. The application of phase field methods to topology optimization was pioneered by \cite{CHEN200157} and \cite{bourdin2006phase}, who first demonstrated their effectiveness for designing maximum stiffness structures under given loads. Subsequent developments have significantly advanced this approach: \cite{TAKEZAWA20102697} introduced a reaction-diffusion formulation incorporating sensitivity-derived double-well potentials, establishing the framework's accuracy for minimum compliance problems. A notable innovation came from \cite{CHOI20112407}, who eliminated the need for double-well potentials by directly using the objective function's derivative as the reaction term, enabling natural hole nucleation in elastic and magnetic field applications. Further refinements were made by \cite{yu2022phase}, who developed unconditionally stable first- and second-order schemes for elastostatic problems through constrained energy modifications.
	
	While these methods successfully address compliance minimization in force-free scenarios, their extension to problems with body forces remains challenging. Although \cite{bruyneel2005note} and \cite{XU201338} have explored topology optimization under body force loads, their approaches fail to guarantee monotonic compliance reduction. This represents a significant limitation, as maintaining such monotonicity while satisfying linear elastic constraints with body forces proves particularly difficult within the phase field framework. 
	
	
	Recent advances in numerical methods for phase field-based topology optimization have yielded significant improvements in solution accuracy and stability. Notably, \cite{yu2023second} developed a second-order energy-stable scheme for Allen-Cahn equations through a novel combination of linear stabilization and Crank-Nicolson discretization. Concurrently, \cite{JIN2024112932} established a provably convergent adaptive phase-field method for structural optimization. However, these approaches still rely on modified objective functionals, leaving the optimization of original objectives as an outstanding challenge. 
	
	Important theoretical breakthroughs have emerged in constrained phase field modeling. The works of \cite{CHENG2022114585} and \cite{cheng2022new} introduced Lagrange multiplier techniques for constructing positivity-preserving and mass-conserving schemes for parabolic equations. This framework was extended by \cite{cheng2023length} to develop length-preserving, energy-dissipative schemes for the Landau-Lifshitz equation, and further generalized by \cite{cheng2025computing} for optimal partition problems with orthogonality-preserving gradient flows.
	
	
	The primary objective of this work is to develop a novel, provably stable phase field method for structural topology optimization. Our approach combines the Lagrange multiplier framework with Karush-Kuhn-Tucker (KKT) conditions to rigorously enforce three critical constraints: (1) bound preservation, (2) volume conservation, and (3) energy dissipation. To simultaneously satisfy these constraints, we incorporate a limiter mechanism \cite{liu1996nonoscillatory,zhang2010maximum} within a first-order operator splitting scheme, yielding an efficient and accurate numerical algorithm for phase field evolution.
	
	The proposed methodology offers several key advantages over existing approaches:
	\begin{itemize}
		\item \textbf{Constraint enforcement}: The numerical scheme guarantees bound-preserving solutions ($0 \leq \phi \leq 1$), exact volume conservation, and monotonic decrease of the objective functional at each iteration, ensuring physical admissibility throughout the optimization process.
		
		\item \textbf{Physical fidelity}: Our formulation correctly handles the full linear elasticity problem with body forces, overcoming limitations of previous phase field methods that were restricted to special load cases.
		
		\item \textbf{Mathematical consistency}: Unlike conventional approaches that employ modified objective functionals with penalty terms, we directly optimize the original objective function while maintaining strict objective functional decaying properties.
	\end{itemize}
	
	This combination of theoretical guarantees and computational practicality represents a significant advance in phase field-based topology optimization, particularly for problems involving complex loading conditions and strict design constraints. To our knowledge, this is the first work on phase field based approaches for structural topology optimization problems that guarantees that monotonically decay of the original objective functional without any modifications. 
	
		The remainder of this paper is organized as follows. Section \ref{sec:model} presents the mathematical formulation of the phase-field-based topology optimization problem, including the governing equations and constraint formulations. In Section \ref{sec:Num}, we develop our novel numerical framework, detailing the first-order operator splitting scheme with Lagrange multiplier enforcement and analyzing its theoretical properties. Section \ref{sec:results} demonstrates the effectiveness of our approach through comprehensive numerical experiments, including both benchmark problems and practical applications. Finally, Section \ref{sec:include} concludes with a summary and discusses potential extensions for future research.
		
		\section{Model formulation}\label{sec:model}
		\subsection{The original model}
		The minimum compliance problem in topology optimization seeks to find the optimal material distribution within a fixed design domain $\Omega \subset \mathbb{R}^d$ ($d = 2, 3$) that minimizes structural compliance under applied loads. The domain $\Omega$ is subject to: body forces $\mathbf{f}$, surface tractions $\mathbf{s}$ on Neumann boundary $\Gamma_T$, prescribed displacements on Dirichlet boundary $\Gamma_D$, and volume constraint $|\Omega_1| = \beta |\Omega| = V_0$
		where $\Omega_1 \subseteq \Omega$ represents the material phase and $\Omega_2$ denotes the void region. 
		
		Let $\mathbf{u}$ be the displacements, the elasticity problem is
		\begin{equation}\label{equ:ela}
			\left\{
			\begin{aligned}
				-\nabla \cdot (\mathbf{E} \varepsilon(\mathbf{u})) = \mathbf{f},~~&{\rm in}~\Omega,\\
				\mathbf{u} = \mathbf{0},~~&{\rm on} ~ \Gamma_D,\\
				\mathbf{E}\varepsilon(\mathbf{u})\cdot \mathbf{n} = \mathbf{s}, ~~&{\rm on} ~ \Gamma_T,\\
				\mathbf{E}\varepsilon(\mathbf{u})\cdot \mathbf{n} = \mathbf{0}, ~~&{\rm on} ~ \Gamma\setminus(\Gamma_D\cap\Gamma_T).
			\end{aligned}
			\right.
		\end{equation}
		Here, $\varepsilon$ is the strain tensor $\varepsilon(\mathbf{u}) = \frac{1}{2}(\nabla\mathbf{u} + (\nabla\mathbf{u})^T)$, $\sigma = \mathbf{E}: \varepsilon(\mathbf{u})$ represents the stress tensor, and $\mathbf{E}$ is the fourth-order stiffness given by,
		\begin{equation*}
			\mathbf{E}[\mathbf{x}] = \begin{cases}
				\mathbf{E}^0, & \mathbf{x} \in \Omega_1, \quad \text{(solid material)} \\
				\mathbf{E}^{\text{int}}(\mathbf{x}), & \mathbf{x} \in \Gamma_\epsilon, \quad \text{(intermediate phase)} \\
				\mathbf{0}, & \mathbf{x} \in \Omega_2, \quad \text{(void region)}
			\end{cases}
		\end{equation*}
		where $\mathbf{E}^0$  is the constant, positive-definite stiffness tensor of the material, $\Gamma_\epsilon : = \Omega \setminus (\Omega_1 \cup \Omega_2)$ represents the diffuse interface region with thickness $\epsilon$, and $\mathbf{E}^{\text{int}}(\mathbf{x})$ denotes the spatially varying stiffness in the transition zone.
		\begin{remark}
			The introduction of an intermediate phase with smoothly varying stiffness $\mathbf{E}^{\text{int}}(\mathbf{x})$ in the transition zone $\Gamma_\epsilon$ ensures the numerical stability during phase evolution, allows gradual material transition, and naturally emerges from the order parameter function which will be introduced later in the phase-field formulation.
		\end{remark}
		
		For an isotropic linear elastic material, the stress-strain relationship is given by Hooke's law:
		\begin{equation*}
			\sigma^0(\mathbf{u}) = \mathbf{E}^0\varepsilon(\mathbf{u}) = \lambda\,\mathrm{tr}(\varepsilon(\mathbf{u}))\mathbf{I} + 2\mu\varepsilon (\mathbf{u}),
		\end{equation*}
		where $\mathrm{tr}(\varepsilon(\mathbf{u}))$ denotes the trace of the strain tensor, $\mathbf{I}$ is the second-order identity tensor, $\lambda$ and $\mu$ are the $lam\acute{e}$ constants. The $lam\acute{e}$ constants are related to the conventional constants through:
		\begin{equation}\label{equ:lame}
			\begin{cases}
				\lambda = \dfrac{E\nu}{(1+\nu)(1-\nu)}, & \mu = \dfrac{E}{2(1+\nu)} \quad \text{in~2D}\\
				\lambda = \dfrac{E\nu}{(1+\nu)(1-2\nu)}, & \mu = \dfrac{E}{2(1+\nu)} \quad \text{in~3D}
			\end{cases}
		\end{equation}
		where $E > 0$ is Young's modulus, $\nu \in (0, 0.5)$ is Poisson's ratio, and $\mu$ remains consistent between 2D and 3D cases.

		\begin{remark}
			In \cite{smejkal2021unified}, the authors derive the thermodynamic stability of deformable isotropic linear elastic solids, include the lam\'{e} constants in 2D and in 3D. However, in \cite{sadd2009elasticity},  the authors develop reduced two-dimensional problems for the elasticity equations in three-dimensional, namely the plane strain problem and the plane stress problem. From the plane strain problem, the lam\'{e} constant $\lambda$ is given as 
			\begin{equation*}
				\lambda = \frac{E\nu}{(1+\nu)(1-2\nu)}, ~~\text{in~2D},
			\end{equation*}
			and the lam\'{e} constant $\lambda$ is same as the first equation in \eqref{equ:lame} from the plane stress problem. 
			
			In this paper, we treat the two-dimensional problem as an independent formulation rather than a reduced version of the elasticity equations. Therefore, we adopt the 2D lam\'{e} constants from \cite{smejkal2021unified}.
		\end{remark}
				
		The structural compliance is defined as the work done by external forces:
		
		\begin{equation*}
			\mathcal{J}(\mathbf{u}) = \underbrace{\int_{\Gamma_T} \mathbf{s} \cdot \mathbf{u}\, ds}_{\text{traction work}} + \underbrace{\int_\Omega \mathbf{f} \cdot \mathbf{u}\, d\mathbf{x}}_{\text{body force work}}
		\end{equation*}
		
		The minimum compliance topology optimization problem seeks to find the optimal material distribution $\Omega_1 \subset \Omega$ that solves:
		\begin{align*}
			&\min_{\Omega_1\in\Omega} \mathcal{J}(\mathbf{u})\nonumber\\
			&\rm subject ~to~(\ref{equ:ela})~ and ~|\Omega_1| = \beta|\Omega| = V_0.
		\end{align*}

		\subsection{The phase field representation}

		In contrast to characteristic-function-based approaches like the prediction-correction iterative convolution-thresholding method \cite{CHEN2024113119}, we employ a phase-field order parameter $\phi(\mathbf{x}) \in [0,1]$ to represent the material distribution:
		
		\begin{equation*}
			\phi(\mathbf{x}) = 
			\begin{cases}
				1 & \mathbf{x} \in \Omega_1 \quad \text{(solid material)} \\
				(0,1) & \mathbf{x} \in \Gamma_\epsilon \quad \text{(diffuse interface)} \\
				0 & \mathbf{x} \in \Omega_2 \quad \text{(void region)}
			\end{cases}
		\end{equation*}
		
		The stiffness tensor and stress field are expressed through a smoothed interpolation:
		
		\begin{align*}
			\mathbf{E}(\phi) &= \left(E_{\min} + (1-E_{\min})\phi^p\right)\mathbf{E}^0 \label{equ:stiff} \\
			\sigma(\mathbf{u},\phi) &= \left(E_{\min} + (1-E_{\min})\phi^p\right)\mathbf{E}^0:\varepsilon(\mathbf{u}) 
		\end{align*}
		where $0 < E_{\min} \ll 1$ prevents numerical singularity and $p$ is chosen to be $3$ as the penalty exponent in SIMP \cite{bendsoe2003topology}. The body force follows a similar interpolation:
		\begin{equation*}
			\mathbf{f}(\phi) = \left(f_{\min} + (1-f_{\min})\phi^p\right)\mathbf{f}^0
		\end{equation*}
		with $0 < f_{\min} \ll 1$.
		The optimization is approximately constrained by:
		\begin{equation*}
			\int_\Omega \phi\, d\mathbf{x} = \beta|\Omega| = V_0, \quad \beta \in (0,1)
		\end{equation*}
		yielding the admissible design space:
		\begin{equation*}
			\Phi = \left\{ \phi \in H^1(\Omega) \mid 0 \leq \phi \leq 1 \text{ a.e.}, \int_\Omega \phi\, d\mathbf{x} = V_0 \right\}.
		\end{equation*}
		
		We formulate the minimum compliance problem using phase-field regularization as the following constrained minimization:
		
		\begin{align}\label{equ:obj}
			&\min_{\phi,\mathbf{u}} J(\phi, \mathbf{u}) = \underbrace{\int_{\Gamma_N} \mathbf{s}\cdot\mathbf{u}\, ds + \int_\Omega \mathbf{f}(\phi)\cdot\mathbf{u}\,d\mathbf{x}}_{\text{Compliance energy}} + \gamma\underbrace{\int_\Omega \left( \frac{\epsilon}{2}|\nabla\phi|^2 + \frac{1}{\epsilon}F(\phi) \right)d\mathbf{x}}_{\text{Phase-field regularization}} \nonumber \\
			&\text{subject to:} \nonumber \\
			&\quad \phi \in \Phi \quad \text{and} \quad \mathbf{u} \in \mathcal{V} \text{ satisfies} 
		\end{align}
		\begin{equation}\label{equ:pha_ela}
			\begin{cases}
				-\nabla \cdot \left(\mathbf{E}(\phi)\varepsilon(\mathbf{u})\right) = \mathbf{f}(\phi) & \text{in } \Omega \\
				\mathbf{u} = \mathbf{0} & \text{on } \Gamma_D \\
				\mathbf{E}(\phi) \varepsilon(\mathbf{u})\cdot\mathbf{n} = \mathbf{s} & \text{on } \Gamma_N \\
				\mathbf{E}(\phi) \varepsilon(\mathbf{u})\cdot\mathbf{n} = \mathbf{0} & \text{on } \partial\Omega\setminus(\Gamma_D\cup\Gamma_N)
			\end{cases}
		\end{equation}
		where $\gamma > 0$ controls the relative weight of interface energy, $F(\phi) = \frac{1}{4}\phi^2(1-\phi)^2$ is the double-well potential, and 
		$$\mathcal{V} = \{\mathbf{v} \in H^1(\Omega)^d \mid \mathbf{v}|_{\Gamma_D} = \mathbf{0}\}$$ is the admissible displacement space. The objective functional comprises compliance terms including mechanical work from tractions and body forces and Ginzburg-Landau free energy that converges to perimeter measure as $\epsilon \to 0^+$ via $\Gamma$-convergence.
		
		We now establish the existence of solutions to the coupled phase-field topology optimization problem defined by (\ref{equ:obj}) and (\ref{equ:pha_ela}). 
		
		\begin{thm}\label{thm1}
			There exists a minimizer $(\phi^*,\textbf{u}^*)$ to the optimization problem (\ref{equ:obj}), \ie
			\begin{equation*}
				\exists(\phi^*,~\textbf{u}^*) \in \Phi \times \mathcal{V},~~ J(\phi^*,\textbf{u}^*) \leq J(\phi,\textbf{u}),\ \  \forall (\phi,\textbf{u})  \in \Phi \times \mathcal{V}. 
			\end{equation*}
		\end{thm}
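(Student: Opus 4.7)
The plan is to apply the direct method of the calculus of variations: bound $J$ from below, extract a weakly convergent minimizing sequence, show that the weak limit remains admissible (\emph{including} the elasticity constraint), and close by weak lower semicontinuity. Because the interpolated stiffness satisfies $\mathbf{E}(\phi)\ge E_{\min}\mathbf{E}^0$ uniformly in $\Phi$, Korn's inequality together with Lax-Milgram provides a unique $\mathbf{u}(\phi)\in\mathcal{V}$ solving (\ref{equ:pha_ela}) for every $\phi\in\Phi$. Testing the weak form with $\mathbf{u}$ itself converts the compliance into the stored strain energy $\int_\Omega \mathbf{E}(\phi)\varepsilon(\mathbf{u}):\varepsilon(\mathbf{u})\,d\mathbf{x}\ge 0$; together with the non-negative Ginzburg-Landau term, this yields $J\ge 0$, so a minimizing sequence $(\phi_n,\mathbf{u}_n)$ exists.

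Next I would derive a priori bounds. The regularization term controls $\|\nabla\phi_n\|_{L^2}$ and the pointwise constraint $0\le\phi_n\le 1$ controls $\|\phi_n\|_{L^\infty}$, so $\phi_n$ is bounded in $H^1(\Omega)$. For $\mathbf{u}_n$, Korn's inequality yields $c E_{\min}\|\mathbf{u}_n\|_{H^1}^2\le\int_\Omega \mathbf{E}(\phi_n)\varepsilon(\mathbf{u}_n):\varepsilon(\mathbf{u}_n)\,d\mathbf{x}$, and the compliance identity combined with Cauchy-Schwarz, the trace theorem, and the pointwise bound $|\mathbf{f}(\phi_n)|\le|\mathbf{f}^0|$ produces $\|\mathbf{u}_n\|_{H^1}\le C$. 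Passing to subsequences, $\phi_n\rightharpoonup\phi^{\ast}$ in $H^1$ (hence strongly in every $L^q$, $q<\infty$, and a.e.\ by Rellich-Kondrachov) and $\mathbf{u}_n\rightharpoonup\mathbf{u}^{\ast}$ in $H^1$, with strong convergence in $L^2$.

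The main obstacle is passing to the limit in the elasticity system, since $\mathbf{E}(\phi_n)\varepsilon(\mathbf{u}_n)$ and $\mathbf{f}(\phi_n)\cdot\mathbf{v}$ couple a weakly convergent sequence with a nonlinearly composed one. The decisive structural observation is that $\phi\mapsto\mathbf{E}(\phi)$ and $\phi\mapsto\mathbf{f}(\phi)$ are polynomials bounded on $[0,1]$, so dominated convergence upgrades $\phi_n\to\phi^{\ast}$ to strong convergence of $\mathbf{E}(\phi_n)$ and $\mathbf{f}(\phi_n)$ in every $L^q$. Pairing these strong limits with the weakly convergent $\varepsilon(\mathbf{u}_n)$ (respectively with a fixed test field) lets me pass to the limit in the weak form of (\ref{equ:pha_ela}), so $\mathbf{u}^{\ast}$ is the state associated with $\phi^{\ast}$. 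Admissibility of $\phi^{\ast}$ is immediate: $0\le\phi^{\ast}\le 1$ from the a.e.\ limit and $\int_\Omega\phi^{\ast}\,d\mathbf{x}=V_0$ from continuity of the linear volume functional under weak $H^1$ convergence.

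Finally I would verify weak lower semicontinuity of $J$. The term $\int_\Omega|\nabla\phi|^2\,d\mathbf{x}$ is weakly lsc on $H^1$; $\int_\Omega F(\phi_n)\,d\mathbf{x}\to\int_\Omega F(\phi^{\ast})\,d\mathbf{x}$ by dominated convergence since $F$ is bounded continuous on $[0,1]$; the traction work $\int_{\Gamma_N}\mathbf{s}\cdot\mathbf{u}_n\,ds$ is continuous in weak $H^1$ via the trace operator; and the body-force work $\int_\Omega\mathbf{f}(\phi_n)\cdot\mathbf{u}_n\,d\mathbf{x}$ converges by pairing the strong $L^2$ limit of $\mathbf{f}(\phi_n)$ with the strong $L^2$ limit of $\mathbf{u}_n$ supplied by Rellich. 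Consequently $J(\phi^{\ast},\mathbf{u}^{\ast})\le\liminf_n J(\phi_n,\mathbf{u}_n)=\inf J$, which furnishes the minimizer. I expect the only genuinely delicate step to be the strong-weak compensation for the nonlinear elasticity coupling; every other ingredient is standard once the polynomial structure of $\mathbf{E}(\phi)$ and $\mathbf{f}(\phi)$ and the floor $E_{\min}>0$ are exploited.
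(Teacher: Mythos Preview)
Your proposal is correct and follows essentially the same route as the paper: direct method with a minimizing sequence, $H^1$ bounds on $\phi_n$ from the Ginzburg--Landau term and the box constraint, $H^1$ bounds on $\mathbf{u}_n$ from Korn and the uniform floor $E_{\min}>0$, passage to the limit in the weak elasticity form via the strong--weak pairing enabled by the polynomial structure of $\mathbf{E}(\phi)$ and $\mathbf{f}(\phi)$, and weak lower semicontinuity of $J$. If anything, you are more explicit than the paper in verifying the lower bound $J\ge 0$ and the admissibility $\phi^{\ast}\in\Phi$.
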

		
		\begin{proof}
			Define the solution operator $S(\phi) := \mathbf{u}$ where $\mathbf{u}$ solves \eqref{equ:pha_ela}. Let $\{(\phi^k, \mathbf{u}^k)\}_{k\in\mathbb{N}}$ be a minimizing sequence satisfying:
			\begin{equation*}
				\lim_{k\to\infty} J(\phi^k,S(\phi^k)) = \inf_{\phi \in \Phi} J(\phi,S(\phi)).
			\end{equation*}
			
			From the phase-field energy and elastic energy terms, we obtain $\sup_k \|\nabla \phi^k\|_{L^2(\Omega)} < \infty$ from the Ginzburg-Landau energy and $\|\phi^k\|_{L^\infty(\Omega)} \leq 1$ by definition of $\Phi$.  Thus, $\exists \phi^* \in \Phi$ and a subsequence (relabeled) such that:
			\begin{equation*}
				\phi^k \rightharpoonup \phi^* \text{ in } H^1(\Omega), \quad \phi^k \to \phi^* \text{ in } L^2(\Omega).
			\end{equation*}
			
			The weak formulation yields:
			\begin{equation*}
				\int_\Omega \mathbf{E}(\phi^k)\varepsilon (\mathbf{u}^k):\varepsilon(\mathbf{v})\,d\mathbf{x} = \int_{\Gamma_N} \mathbf{s}\cdot\mathbf{v}\,ds + \int_\Omega \mathbf{f}(\phi^k)\cdot\mathbf{v}\,d\mathbf{x}, \quad \forall \mathbf{v} \in \mathcal{V}.
			\end{equation*}
			Using Korn's inequality and the uniform ellipticity of $\mathbf{E}(\phi^k)$, we derive:
			\begin{equation*}
				\|\mathbf{u}^k\|_{H^1(\Omega)} \leq C\left(\|\mathbf{s}\|_{L^2(\Gamma_T)} + \|\mathbf{f}(\phi^k)\|_{L^2(\Omega)}\right) \leq C'.
			\end{equation*}
			Thus, $\exists \mathbf{u}^* \in \mathcal{V}$ and subsequence with:
			\begin{equation*}
				\mathbf{u}^k \rightharpoonup \mathbf{u}^* \text{ in } H^1(\Omega), \quad \mathbf{u}^k \to \mathbf{u}^* \text{ in } L^2(\Omega).
			\end{equation*}
			For any test function $\mathbf{v} \in \mathcal{V}$:
			\begin{align*}
				&\left|\int_\Omega \left(\mathbf{E}(\phi^k)\varepsilon(\mathbf{u}^k) - \mathbf{E}(\phi^*)\varepsilon(\mathbf{u}^*)\right):\varepsilon(\mathbf{v})\,d\mathbf{x}\right| \\
				&\leq \left|\int_\Omega (\mathbf{E}(\phi^k) - \mathbf{E}(\phi^*))\varepsilon(\mathbf{u}^k):\varepsilon(\mathbf{v})\,d\mathbf{x}\right| + \left|\int_\Omega \mathbf{E}(\phi^*)(\varepsilon(\mathbf{u}^k) - \varepsilon(\mathbf{u}^*)):\varepsilon(\mathbf{v})\,d\mathbf{x}\right| \to 0.
			\end{align*}
			Similarly, the body force term converges:
			\begin{equation*}
				\int_\Omega \mathbf{f}(\phi^k)\cdot\mathbf{v}\,d\mathbf{x} \to \int_\Omega \mathbf{f}(\phi^*)\cdot\mathbf{v}\,d\mathbf{x}.
			\end{equation*}
			Thus, $\mathbf{u}^* = S(\phi^*)$.
			Because of the fact that the Ginzburg-Landau energy is weakly lower semicontinuous:
			\begin{equation*}
				\int_\Omega \left(\frac{\epsilon}{2}|\nabla \phi^*|^2 + \frac{1}{\epsilon}F(\phi^*)\right)d\mathbf{x} \leq \liminf_{k\to\infty} \int_\Omega \left(\frac{\epsilon}{2}|\nabla \phi^k|^2 + \frac{1}{\epsilon}F(\phi^k)\right)d\mathbf{x}
			\end{equation*}
			and the compliance terms converge strongly:
			\begin{equation*}
				\int_{\Gamma_N} \mathbf{s}\cdot\mathbf{u}^k\,ds + \int_\Omega \mathbf{f}(\phi^k)\cdot\mathbf{u}^k\,d\mathbf{x} \to \int_{\Gamma_N} \mathbf{s}\cdot\mathbf{u}^*\,ds + \int_\Omega \mathbf{f}(\phi^*)\cdot\mathbf{u}^*\,d\mathbf{x},
			\end{equation*}
			we have that $(\phi^*,\mathbf{u}^*)$ satisfies:
			\begin{equation*}
				J(\phi^*,\mathbf{u}^*) \leq \liminf_{k\to\infty} J(\phi^k,\mathbf{u}^k) = \inf_{(\phi,\mathbf{u}) \in \Phi \times \mathcal{V}} J(\phi,\mathbf{u}),
			\end{equation*}
			establishing the existence of a minimizer.
		\end{proof}

		\subsection{First-order optimality conditions}
		To derive the necessary conditions for optimality, we construct the Lagrangian functional $\Tilde J$ by incorporating all constraints via Lagrange multipliers:
		
		\begin{align}\label{equ:lag}
			\Tilde J(\phi,\mathbf{u},\bar{\mathbf{u}},\lambda,\eta) = & J(\phi,\mathbf{u}) - \underbrace{\int_\Omega \mathbf{E}(\phi)\varepsilon(\mathbf{u}):\varepsilon(\bar{\mathbf{u}})\,d\mathbf{x}}_{\text{Elasticity weak form}} + \underbrace{\int_{\Gamma_T} \mathbf{s}\cdot\bar{\mathbf{u}}\,ds + \int_\Omega \mathbf{f}(\phi)\cdot\bar{\mathbf{u}}\,d\mathbf{x}}_{\text{Loading terms}} \nonumber \\
			& + \underbrace{\lambda\left(\int_\Omega \phi\,d\mathbf{x} - V_0\right)}_{\text{Volume constraint}} + \underbrace{\int_\Omega \eta\phi(1-\phi)\,d\mathbf{x}}_{\text{Bound constraint}}
		\end{align}
		where $\bar{\mathbf{u}} \in \mathcal{V}$ is the adjoint displacement (Lagrange multiplier for the elasticity system),  $\lambda \in \mathbb{R}$ is the multiplier for the volume constraint, and $\eta \in L^2(\Omega)$ is the multiplier for the bound constraint $0 \leq \phi \leq 1$.
		
		In order to get KKT system for (\ref{equ:lag}), we first derive $\textbf{u}$ and $\bar{\textbf{u}}$ satisfying
		\begin{equation}\label{equ:grad}
			\frac{\delta}{\delta\textbf{u}}\Tilde{J}(\phi,\textbf{u},\bar{\textbf{u}},\lambda,\eta) = 0,~~\frac{\delta}{\delta\bar{\textbf{u}}}\Tilde{J}(\phi, \textbf{u},\bar{\textbf{u}},\lambda,\eta) = 0,
		\end{equation}
		for a given $\phi$. The adjoint equation can be deduced as follows:
		\begin{align*}
			\int_{\Omega}\frac{\delta \Tilde{J}}{\delta \textbf{u}}\cdot\textbf{v}d\textbf{x}& = \frac{d}{d\zeta}\Tilde{J}(\textbf{u}+\zeta\textbf{v})\bigg|_{\zeta=0} \\
			& = \int_{\Gamma_T}\textbf{s}\cdot\textbf{v} d\textbf{s} + \int_{\Omega}\textbf{f}(\phi)\cdot \textbf{v}d\textbf{x} -\int_{\Omega}\textbf{E}(\phi)\varepsilon(\textbf{v}):\varepsilon(\bar{\textbf{u}})d\textbf{x}\\
			&=\int_{\Gamma_T}\textbf{s}\cdot\textbf{v} d\textbf{s} + \int_{\Omega}\textbf{f}(\phi)\cdot \textbf{v}d\textbf{x} + \int_\Omega \nabla\cdot(\textbf{E}\varepsilon(\bar{\textbf{u}}))\cdot\textbf{v}d\textbf{x} -\int_{\Gamma_T}(\textbf{E}\varepsilon(\bar{\textbf{u}}))\cdot\textbf{n}\cdot\textbf{v}d\textbf{x},
		\end{align*}
		that is,
		\begin{equation*}
			\left\{
			\begin{aligned}
				-\nabla \cdot (\textbf{E}\varepsilon(\bar{\textbf{u}})) = \textbf{f},~~&{\rm in}~\Omega,\\
				\bar{\textbf{u}} = \textbf{0},~~&\rm{on} ~ \Gamma_D,\\
				\textbf{E}\varepsilon(\bar{\textbf{u}})\cdot \textbf{n} = \textbf{s}, ~~&{\rm on} ~ \Gamma_T,\\
				\textbf{E}\varepsilon(\bar{\textbf{u}})\cdot \textbf{n} = \textbf{0}, ~~&{\rm on} ~\Gamma\setminus(\Gamma_D\cap\Gamma_T).
			\end{aligned}
			\right.
		\end{equation*}
		It's easy to see that $\bar{\textbf{u}} = \textbf{u}$, so we simply set $\bar{\textbf{u}} = \textbf{u}$ in the follows.
		
		By the implicit function theorem,  the variation derivative of $\Tilde J$ with respect to $\phi$ can be computed by 
		\begin{align*}
			\int_{\Omega}\frac{\delta\Tilde{J}(\phi, \mathbf{u}(\phi))}{\delta \phi}\psi\, d\textbf{x} &= \frac{d}{d\zeta}\Tilde{J}(\phi+\zeta\psi)\big|_{\zeta=0}\\
			& = 2\int_{\Gamma_T} \textbf{s}\cdot\textbf{u}'(\phi)\psi \, d\textbf{s} + 2\int_{\Omega}\textbf{f}(\phi)\psi\cdot \textbf{u}'(\phi)\psi\, d\textbf{x} + 2\int_{\Omega}\textbf{f}'(\phi)\psi\cdot \textbf{u}\, d\textbf{x}\\
			&~~~~+ \gamma\int_{\Omega}\big( \epsilon\nabla\phi\cdot\nabla\psi + \frac{1}{\epsilon}F'(\phi)\psi \big)\, d\textbf{x} 
			- \int_\Omega \textbf{E}'(\phi)\psi\varepsilon(\textbf{u}):\varepsilon(\textbf{u})\,d\textbf{x} - 2\int_\Omega \textbf{E}(\phi)\varepsilon(\textbf{u}):\varepsilon(\textbf{u}'(\phi)\psi)\,d\textbf{x}\\
			&~~~~+\int_{\Omega}\lambda\psi\, d\textbf{x} +\frac{1}{|\Omega|}\int_{\Omega}\eta(1-2\phi)\psi\, d\textbf{x}.
		\end{align*}
		Here we let $\textbf{u}_\phi := \langle\textbf{u}'(\phi),\, \psi\rangle $, taking the test function $\textbf{u}_\phi$ in \eqref{equ:pha_ela}, we get
		\begin{equation*}
			\int_{\Omega}\textbf{E}(\phi)\varepsilon(\textbf{u}):\varepsilon(\textbf{u}_\phi)\, d\textbf{x} = \int_{\Gamma_T} \textbf{s}\cdot \textbf{u}_\phi \, d\textbf{s} + \int_{\Omega}\textbf{f}(\phi)\cdot \textbf{u}_\phi\, d\textbf{x}.
		\end{equation*}
		Therefore, we have
		\begin{equation}\label{equ:delta_phi}
			\int_{\Omega}\frac{\delta\Tilde{J}(\phi, \mathbf{u}(\phi))}{\delta \phi}\psi\, d\textbf{x} = \int_{\Omega}\bigg( -\gamma\epsilon\Delta\phi + \frac{\gamma}{\epsilon}F'(\phi) + 2\textbf{f}'(\phi)\cdot \textbf{u} - \textbf{E}'(\phi)\varepsilon(\textbf{u}):\varepsilon(\textbf{u}) + \lambda + \eta (1-2\phi)\bigg)\psi\,d\textbf{x} + \gamma\int_{\partial\Omega}\epsilon\frac{\partial \phi}{\partial \textbf{n}}\cdot\psi\, d\textbf{s}.
		\end{equation}
		
		%

		To solve the phase-field optimality condition, we employ a gradient flow approach in artificial time $t$: 
		$\frac{\partial \phi}{\partial t} = -\frac{\delta}{\delta\phi}\Tilde{J}(\phi,\textbf{u},\bar{\textbf{u}},\lambda,\eta)$. The phase field based equation is then given as,
		\begin{equation}\label{equ:phase}
			\left\{\begin{aligned}
				& \frac{\partial \phi}{\partial t} = \gamma\epsilon\Delta\phi-\frac{\gamma}{\epsilon}F'(\phi) + \textbf{E}'(\phi)\varepsilon(\textbf{u}):\varepsilon(\textbf{u})-2\textbf{f}'(\phi)\cdot\textbf{u}-\lambda-\eta(1-2\phi),~~\rm in ~ \Omega,~t>0,\\
				& \phi(\textbf{x},0) = \phi^0(\textbf{x}), ~~\rm in~ \Omega,\\
				& \frac{\partial\phi}{\partial\textbf{n}} = 0, ~~\rm on~\Gamma,\\
				& \lambda\geq 0,~~\int_\Omega\phi d\textbf{x}-V_0 = 0,~~ \lambda\left(\int_\Omega\phi d\textbf{x}-V_0\right) = 0,\\
				& \eta \geq 0,~~\phi(1-\phi)\geq 0,~~\eta\phi(1-\phi)=0.
			\end{aligned}
			\right.
		\end{equation}
		
		\begin{remark}
			It is easy to see that the above problem involves two fundamentally different types of constraints:
			\begin{itemize}
				\item \textit{Local (pointwise) bound constraint}:
				\begin{equation*}
					0 \leq \phi(\mathbf{x},t) \leq 1 \quad \forall \mathbf{x} \in \Omega, \; t > 0
				\end{equation*}
				enforced by a space-time dependent Lagrange multiplier $\eta(\mathbf{x},t) \in L^2(\Omega \times \mathbb{R}^+)$ with complementarity conditions:
				\begin{equation*}
					\eta(\mathbf{x},t) \geq 0, \quad \eta(\mathbf{x},t)\phi(\mathbf{x},t)(1-\phi(\mathbf{x},t)) = 0. 
				\end{equation*}
				\item \textit{Global volume constraint}:
				\begin{equation*}
					\int_\Omega \phi(\mathbf{x},t)\,d\mathbf{x} = V_0 \quad \forall t > 0
				\end{equation*}
				enforced by a time-dependent scalar Lagrange multiplier $\lambda(t) \in \mathbb{R}$.
			\end{itemize}
		\end{remark}
		
		According to (\ref{equ:pha_ela}), (\ref{equ:grad}) and (\ref{equ:phase}), we show the rate of objective functional decay in the follows.
		
		\begin{thm}\label{thm2}
			For solutions $(\phi,\mathbf{u})$ to the coupled system \eqref{equ:pha_ela} and \eqref{equ:phase}, the compliance functional satisfies:
			\begin{equation*}
				\frac{d J(\phi,\textbf{u}(\phi))}{dt} =   -\|\phi_t\|^2  \leq 0, ~~t > 0.
			\end{equation*}
		\end{thm}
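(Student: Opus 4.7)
The strategy is to apply the chain rule to $J(\phi,\mathbf{u}(\phi))$, use the elasticity weak form together with its time derivative to eliminate every contribution involving $\mathbf{u}_t$ in favor of one involving $\phi_t$, and then identify the resulting pointwise integrand with $-\phi_t$ via the gradient flow equation \eqref{equ:phase}, leaving only Lagrange-multiplier residuals to handle.

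Concretely, I would first differentiate $J$ in $t$ from its definition: integration by parts in the Ginzburg--Landau term (using the Neumann condition $\partial_n\phi=0$) produces $\int_\Omega\bigl(-\gamma\epsilon\Delta\phi+\tfrac{\gamma}{\epsilon}F'(\phi)\bigr)\phi_t\,d\mathbf{x}$, while the compliance yields traction and body-force pieces involving $\mathbf{u}_t$. Next, testing the weak form of \eqref{equ:pha_ela} against $\mathbf{u}_t$ rewrites $\int_{\Gamma_T}\mathbf{s}\cdot\mathbf{u}_t\,ds + \int_\Omega \mathbf{f}(\phi)\cdot\mathbf{u}_t\,d\mathbf{x}$ as $\int_\Omega \mathbf{E}(\phi)\varepsilon(\mathbf{u}):\varepsilon(\mathbf{u}_t)\,d\mathbf{x}$. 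Differentiating that same weak form in time with test $\mathbf{v}=\mathbf{u}$ then re-expresses this quantity in terms of $\phi_t$, giving $\int_\Omega \mathbf{f}'(\phi)\phi_t\cdot\mathbf{u} - \mathbf{E}'(\phi)\phi_t\,\varepsilon(\mathbf{u}){:}\varepsilon(\mathbf{u})\,d\mathbf{x}$. Assembling the pieces, the coefficient of $\phi_t$ in the integrand becomes exactly the right-hand side of \eqref{equ:phase} minus $\lambda$ and $\eta(1-2\phi)$, so the chain rule collapses to
\[
\frac{dJ}{dt} \;=\; \int_\Omega \bigl(-\phi_t - \lambda - \eta(1-2\phi)\bigr)\phi_t\,d\mathbf{x} \;=\; -\|\phi_t\|^2 \;-\; \lambda\!\int_\Omega\phi_t\,d\mathbf{x} \;-\; \int_\Omega \eta(1-2\phi)\phi_t\,d\mathbf{x}.
\]

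The final step is to eliminate the two multiplier residuals. The volume contribution $-\lambda\int_\Omega\phi_t\,d\mathbf{x}$ vanishes immediately from volume conservation $\int_\Omega\phi\,d\mathbf{x}\equiv V_0$. For the bound contribution, I would argue pointwise a.e.: on $\{0<\phi<1\}$ the complementarity $\eta\phi(1-\phi)=0$ forces $\eta=0$, while on $\{\phi\in\{0,1\}\}$ the bound-preservation requirement forces $\phi_t=0$ at any point where $\eta>0$ (otherwise $\eta=0$ and the constraint is inactive). Equivalently, differentiating the identity $\int_\Omega \eta\phi(1-\phi)\,d\mathbf{x}\equiv 0$ in $t$ gives $\int_\Omega\eta(1-2\phi)\phi_t\,d\mathbf{x} = -\int_\Omega\eta_t\phi(1-\phi)\,d\mathbf{x}$, and the right-hand side vanishes because $\eta\equiv 0$ on the open set $\{\phi(1-\phi)>0\}$ by complementarity, so $\eta_t$ vanishes there as well.

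The principal obstacle is precisely this last cancellation: rigorously justifying the vanishing of $\int_\Omega \eta(1-2\phi)\phi_t\,d\mathbf{x}$ in the continuous setting requires a careful interpretation of the KKT complementarity at the free boundary $\partial\{\phi\in\{0,1\}\}$ together with enough regularity on $\eta$ and $\phi_t$ there; a fully rigorous treatment typically recasts \eqref{equ:phase} as a variational inequality. Once this delicate point is handled, both multiplier residuals drop out and the conclusion $\frac{dJ}{dt}=-\|\phi_t\|^2 \leq 0$ follows.
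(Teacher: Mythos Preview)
Your argument is correct and arrives at the identity by a legitimate route; it differs from the paper's proof mainly in organization rather than substance. The paper works through the Lagrangian $\tilde J$: since the gradient flow reads $\phi_t=-\frac{\delta\tilde J}{\delta\phi}$, one has $\frac{d\tilde J}{dt}=(\frac{\delta\tilde J}{\delta\phi},\phi_t)=-\|\phi_t\|^2$, and then the paper observes that each of the four constraint terms in $\tilde J-J$ (the elasticity residual, the volume term $\lambda(\int_\Omega\phi-V_0)$, and the bound term $\int_\Omega\eta\phi(1-\phi)$) is identically zero along the trajectory, so $\frac{dJ}{dt}=\frac{d\tilde J}{dt}$. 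You instead differentiate $J$ directly, push the $\mathbf u_t$-dependence into $\phi_t$-dependence using the weak form and its time derivative, and then recognize the integrand as $-\phi_t-\lambda-\eta(1-2\phi)$ via \eqref{equ:phase}; this leaves exactly the two multiplier residuals to kill.

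The two treatments are equivalent: your residual $\int_\Omega\eta(1-2\phi)\phi_t\,d\mathbf{x}$ is precisely one piece of $\frac{d}{dt}\int_\Omega\eta\phi(1-\phi)\,d\mathbf{x}$, and the paper's ``identically zero hence zero derivative'' step is the same complementarity argument you spell out, just less explicitly. Your version is more hands-on and transparent about where the adjoint (self-adjoint) structure of the elasticity problem enters; the paper's version is shorter because the Lagrangian already encodes the adjoint cancellation and the vanishing of the constraint terms. Your caveat about the regularity needed at the free boundary $\partial\{\phi\in\{0,1\}\}$ is well taken and applies equally to the paper's proof, which simply asserts $\eta\phi(1-\phi)=0$ without further comment.
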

		\begin{proof}
			From \eqref{equ:delta_phi} and \eqref{equ:phase}, we obtain
			\begin{equation*}
				\frac{d\Tilde{J}(\phi,\textbf{u}(\phi))}{dt} = \bigg(\frac{\delta\Tilde{J}(\phi,\textbf{u}(\phi))}{\delta\phi},\frac{\partial\phi}{\partial t}\bigg) = -\|\phi_t\|^2.
			\end{equation*}
			and
			\begin{align*}
				\frac{d\Tilde{J}(\phi,\textbf{u}(\phi))}{dt} = &\frac{d J(\phi,\textbf{u}(\phi))}{dt} + \frac{d}{dt} \bigg( -\int_{\Omega} \textbf{E}(\phi) \varepsilon(\textbf{u}):\varepsilon(\textbf{u})\, d\textbf{x} + \int_{\Gamma_T}\textbf{s}\cdot \textbf{u}\,ds + \int_{\Omega} \textbf{f}(\phi)\cdot \textbf{u}\, d\textbf{x} \\
				&+ \lambda\big(\int_{\Omega}(\phi)d\textbf{x} - V_0\big) + \int_{\Omega}\eta\phi(1-\phi)\, d\textbf{x} \bigg).
			\end{align*}
			From the constraint \eqref{equ:pha_ela}, we have
			\begin{equation}\label{equ:b}
				\int_\Omega\textbf{E}(\phi)\varepsilon(\textbf{u}):\varepsilon(\textbf{u})\, d\textbf{x}  -\int_{\Gamma_T}\textbf{s}\cdot\textbf{u}\, ds - \int_{\Omega}\textbf{f}(\phi)\cdot\textbf{u}\, d\textbf{x} = 0.
			\end{equation}
			From volume constraint $\int_\Omega \phi \, d\textbf{x}-V_0=0$ and bound constraints $\phi(1-\phi)\geq 0$, we have $\lambda\big( \int_\Omega\phi d\textbf{x} - V_0 \big)=0$ and $\eta\phi(1-\phi) = 0$.
			
			Combining the above results, the proof is completed
			\begin{equation*}\label{equ:d}
				\frac{d}{dt} J(\phi, \textbf{u})=  -\|\phi_t\|^2.
			\end{equation*}
		\end{proof}
		
\begin{remark}
In this section, we derive the first-order optimality conditions and the associated gradient flow system for the objective functional $\Tilde{J}$. From the gradient flow system \eqref{equ:phase} and \eqref{equ:b}, we observe that 
\[
J(\phi, \mathbf{u}) = \Tilde{J}(\phi, \mathbf{u}, \bar{\mathbf{u}}, \lambda, \eta),
\]
confirming that the original objective functional $J(\phi, \mathbf{u})$ is preserved. Notably, the gradient flow system \eqref{equ:phase} incorporates both bound constraints and a global volume constraint. Developing an efficient numerical algorithm to solve this constrained gradient flow system while ensuring the non-increasing property of the objective functional presents a significant challenge.

To the best of our knowledge, existing stable phase-field methods---which guarantee the decay of the objective functional---heavily rely on the self-adjointness of the system and often require modifications to the objective functional.
\end{remark}

		\section{Numerical scheme for the gradient flow}\label{sec:Num}
		In this section, we propose numerical approximations for the gradient flow system (\ref{equ:phase}). We fixed $\Delta t$ as the time step, and $t^n = n \Delta t,~n=0, 1,2,\cdots,N$, where $T$ is the final time and $N=\frac{T}{\Delta t}$. To effectively solve above system, we decouple the computation of displacement filed and the order parameter separately by a first order operator splitting method. 
		
		\subsection{ First-order operator splitting method}\label{sec:splitting}
		Given the phase field distribution $\phi^n$ at time step $n$, we compute $\phi^{n+1}$ through the following sequence of operations:
		
		\textbf{1. Elastic problem solution:}
		The displacement field $\textbf{u}^{n+1}$ is obtained by solving the linear elasticity boundary value problem as discussed above:
		\begin{equation}\label{equ:dis_ela}
			\left\{
			\begin{aligned}
				-\nabla \cdot (\textbf{E}(\phi^n)\varepsilon(\textbf{u})) = \textbf{f}(\phi^n),~~&\rm{in}~\Omega,\\
				\textbf{u} = \textbf{0},~~&\rm{on} ~ \Gamma_D,\\
				\textbf{E}(\phi^n)\varepsilon(\textbf{u})\cdot \textbf{n} = \textbf{s}, ~~&\rm{on} ~ \Gamma_T,\\
				\textbf{E}(\phi^n)\varepsilon(\textbf{u})\cdot \textbf{n} = \textbf{0}, ~~&\rm{on} ~\Gamma\setminus(\Gamma_D\cap\Gamma_T)
			\end{aligned}
			\right.
		\end{equation}
		where the stiffness tensor $\textbf{E}(\phi^n)$ and body force $\textbf{f}(\phi^n)$ are evaluated using the current phase field distribution.
		
		\textbf{2. Phase field evolution:} The intermediate phase field $\Tilde{\phi}^{n+1}$ is computed via a semi-implicit discretization:
		\begin{equation}\label{equ:dis1}
			\frac{\Tilde{\phi}^{n+1}-\phi^n}{\Delta t} - \gamma\epsilon\Delta\Tilde{\phi}^{n+1} = -\frac{\gamma}{\epsilon}F'(\phi^n) + \textbf{E}'(\phi^n)\varepsilon(\textbf{u}^{n+1}):\varepsilon(\textbf{u}^{n+1}) - 2\textbf{f}'(\phi^n)\cdot \textbf{u}^{n+1}.
		\end{equation}
		
		\textbf{3. Bound-preserving projection: } 
		To enforce the bound preserving constraint, we apply a pointwise projection:
		\begin{align*}
			&\frac{\mathring{\phi}^{n+1}-\Tilde{\phi}^{n+1}}{\Delta t} = \eta^{n+1}(1-2\mathring{\phi}^{n+1}),\nonumber\\
			&\eta^{n+1}\geq 0,~~\mathring{\phi}^{n+1}(1-\mathring{\phi}^{n+1})\geq 0, ~~\eta^{n+1}\mathring{\phi}^{n+1}(1-\mathring{\phi}^{n+1})=0.
		\end{align*}
		It is equivalent to a simple cut-off approach:
		\begin{equation}\label{equ:dis3}
			(\mathring{\phi}^{n+1},\eta^{n+1}) =
			\begin{cases}
				&(\Tilde{\phi}^{n+1}, ~0),~~0<\Tilde{\phi}^{n+1}< 1,\\
				&(0,~ -\frac{\Tilde{\phi}^{n+1}}{\Delta t}),~~\Tilde{\phi}\leq 0,\\
				&(1,~ \frac{1-\Tilde{\phi}^{n+1}}{-\Delta t }),~~\Tilde{\phi}\geq 1.
			\end{cases}
		\end{equation}
		
		\begin{remark}[Bound-preserving optimization]
			The bound-constrained projection step in \eqref{equ:dis3} can be interpreted variationally as solving the following optimization problem,
			\begin{equation*}
				\left\{
				\begin{aligned}
					&\eta^{n+1} = \min_{\eta\geq 0 } \tilde{J}(\textbf{u},\phi,\lambda,\eta)\\
					&\eta^{n+1}\geq 0,~~\mathring{\phi}^{n+1}(1-\mathring{\phi}^{n+1})\geq 0, ~~\eta^{n+1}\mathring{\phi}^{n+1}(1-\mathring{\phi}^{n+1})=0.
				\end{aligned}\right.
			\end{equation*}
		\end{remark}

		\textbf{4. Volume conservation:} The volume correction is performed by solving $\hat{\phi}^{n+1},~\lambda^{n+1}$ from
		\begin{align*}
			&\frac{\hat{\phi}^{n+1}-\mathring{\phi}^{n+1}}{\Delta t} = \lambda^{n+1},\nonumber\\
			&\lambda^{n+1}\geq 0,~~\int_\Omega\hat{\phi}^{n+1}d\textbf{x}-V_0=0,~~\lambda^{n+1}\big(\int_\Omega\hat{\phi}^{n+1}d\textbf{x}-V_0\big)=0,
		\end{align*}
		where $\lambda^{n+1}$ is constant. It is equivalent to
		\begin{align*}
			&\hat{\phi}^{n+1} = \mathring{\phi}^{n+1} + \Delta t \lambda^{n+1},\nonumber\\
			&\int_\Omega\hat{\phi}^{n+1}d\textbf{x} = \int_\Omega ( \mathring{\phi}^{n+1} + \Delta t \lambda^{n+1} )d\textbf{x} = V_0,\\
			&\int_{\text{D}_1}\mathring{\phi}^{n+1}d\textbf{x} + \int_{\text{D}_2}(\mathring{\phi}^{n+1} + \Delta t\lambda^{n+1})d\textbf{x}=V_0,\nonumber\\
			&\lambda^{n+1} = \frac{V_0-\int_\Omega\mathring{\phi}^{n+1}d\textbf{x}}{\Delta t |\text{D}_2|},~~~\rm in~D_2.\nonumber
		\end{align*}
		Here, $\text{D}_1$ denotes the domain that the phase field values are equal to $0$ or $1$, $\text{D}_2$ denotes the other domain, \ie
		\begin{equation*}
			\text{D}_1 := \{p\in \mathcal{N},~\mathring{\phi}^{n+1}(p)=0,~\rm or~\mathring{\phi}^{n+1}(p)=1 \},
		\end{equation*}
		$\text{D}_2 := \mathcal{N}~\backslash \text{D}_1$, $\Omega = \text{D}_1\cup \text{D}_2$ and $\text{D}_1\cap \text{D}_2 = \varnothing$, $\mathcal{N}$ is the set of all vertices of the grid.
		Therefore,
		\begin{equation*}
			\lambda^{n+1} = \begin{cases}
				&0,~~\rm in ~~D_1,\\
				&\frac{V_0-\int_\Omega\mathring{\phi}^{n+1}d\textbf{x}}{\Delta t |\text{D}_2|},~~~\rm in~D_2,
			\end{cases}
		\end{equation*}
		and we obtain
		\begin{equation*}
			\hat{\phi}^{n+1} = \mathring{\phi}^{n+1} + \Psi(\textbf{x})\frac{V_0-\int_\Omega\mathring{\phi}^{n+1}d\textbf{x}}{ |\text{D}_2|}.
		\end{equation*}
		where $\Psi(\textbf{x})$ is the indicator function of domain  $\text{D}_2$.

		\textbf{5. Limiter application:} From the above analysis, we find that $\hat{\phi}^{n+1}$ does not satisfy the bound constraint. To simultaneously satisfy both bound and volume constraints, we apply a linear scaling limiter \cite{liu1996nonoscillatory,zhang2010maximum} in $\text{D}_2$.
		Let $\bar{\phi}^{n+1}$ denote the integral average of $\hat{\phi}^{n+1}$ in domain $\text{D}_2$, \ie
		\begin{equation}\label{equ:average}
			\bar{\phi}^{n+1} = \frac{\int_{\text{D}_2}\hat{\phi}^{n+1}d\textbf{x}}{|\text{D}_2|}
		\end{equation}
		and $\phi_{max} = \max_{p\in \text{D}_2}\hat{\phi}^{n+1}(p)$, $\phi_{min} = \min_{p\in \text{D}_2}\hat{\phi}^{n+1}(p)$.
		The limiter can be applied as follows:
		\begin{equation}\label{equ:phi_new}
			\Breve{\phi} = \Psi(\textbf{x})(\theta(\hat{\phi}^{n+1} - \bar{\phi}^{n+1}) + \bar{\phi}^{n+1})+(1-\Psi(\textbf{x}))\hat{\phi}^{n+1},\end{equation}
		where 
		\[ \theta = \min\bigg\{\bigg|\frac{1-\bar{\phi}^{n+1}}{\phi_{max} -\bar{\phi}^{n+1} }\bigg|,~~ \bigg|\frac{-\bar{\phi}^{n+1}}{\phi_{min} -\bar{\phi}^{n+1} }\bigg|,~~ 1 \bigg\}.\]
		
		\begin{lem}\label{rem:1}
			$\Breve{\phi}$ satisfy the boundedness and volume constraint.
		\end{lem}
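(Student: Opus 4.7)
The plan is to verify the two claimed properties—volume conservation and pointwise boundedness—separately, exploiting the decomposition $\Omega = D_1 \cup D_2$. On $D_1$ the indicator $\Psi$ vanishes, so by \eqref{equ:phi_new} one has $\Breve{\phi} = \hat{\phi}^{n+1} = \mathring{\phi}^{n+1} \in \{0,1\}$ pointwise, which already settles the bound constraint on that region. On $D_2$ the limiter acts as the affine rescaling $\hat{\phi}^{n+1} \mapsto \theta(\hat{\phi}^{n+1} - \bar{\phi}^{n+1}) + \bar{\phi}^{n+1}$ centered at the mean $\bar{\phi}^{n+1}$, and the whole proof will reduce to examining this rescaling.

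For volume conservation, the key observation is that affine rescaling about the mean preserves integrals. Integrating \eqref{equ:phi_new} over $D_2$ and invoking \eqref{equ:average}, the cross term $\theta \int_{D_2}(\hat{\phi}^{n+1} - \bar{\phi}^{n+1})\,d\mathbf{x}$ vanishes, so $\int_{D_2} \Breve{\phi}\,d\mathbf{x} = \bar{\phi}^{n+1}|D_2| = \int_{D_2} \hat{\phi}^{n+1}\,d\mathbf{x}$. Combined with $\Breve{\phi} = \hat{\phi}^{n+1}$ on $D_1$, this gives $\int_\Omega \Breve{\phi}\,d\mathbf{x} = \int_\Omega \hat{\phi}^{n+1}\,d\mathbf{x} = V_0$, the last equality being exactly what Step 4 of the splitting scheme enforces.

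For pointwise boundedness on $D_2$, I would first verify that $\bar{\phi}^{n+1} \in [0,1]$. Using $\int_\Omega \hat{\phi}^{n+1}\,d\mathbf{x} = V_0$ and splitting the integral over $D_1$ and $D_2$, one obtains the explicit expression $\bar{\phi}^{n+1} = (V_0 - |\{\mathring{\phi}^{n+1}=1\}|)/|D_2|$, which lies in $[0,1]$ under the natural compatibility condition $|\{\mathring{\phi}^{n+1}=1\}| \leq V_0 \leq |\Omega| - |\{\mathring{\phi}^{n+1}=0\}|$ on the target volume relative to the clipped mask produced in Step 3. Once this is in hand, the defining inequalities of $\theta$ give $\theta(\phi_{\max}-\bar{\phi}^{n+1}) \leq 1-\bar{\phi}^{n+1}$ and $\theta(\bar{\phi}^{n+1}-\phi_{\min}) \leq \bar{\phi}^{n+1}$, so that $\Breve{\phi} - \bar{\phi}^{n+1} = \theta(\hat{\phi}^{n+1} - \bar{\phi}^{n+1})$ is sandwiched between $-\bar{\phi}^{n+1}$ and $1 - \bar{\phi}^{n+1}$, yielding $0 \leq \Breve{\phi} \leq 1$ pointwise on $D_2$.

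The main subtlety is the well-posedness of the definition of $\theta$. If $\phi_{\max} = \bar{\phi}^{n+1}$ or $\phi_{\min} = \bar{\phi}^{n+1}$, then $\hat{\phi}^{n+1}$ is constant on $D_2$ and equal to $\bar{\phi}^{n+1} \in [0,1]$, and the convention $\theta = 1$ (built into the $\min$) handles that degenerate case without any division. Verifying the compatibility hypothesis on $V_0$ is the only non-mechanical step: it holds whenever the cut-off mask from Step 3 does not prematurely force a volume outside the target, a mild assumption that one can either impose on the initial data and check inductively, or relax by tightening Step 3 to avoid over-clipping. With these observations in place the two claims assemble into a short verification.
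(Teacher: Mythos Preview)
Your volume--conservation argument is exactly the paper's: split $\Omega=D_1\cup D_2$, observe that on $D_2$ the affine rescaling about the mean $\bar{\phi}^{n+1}$ has the same integral as $\hat{\phi}^{n+1}$, and then quote $\int_\Omega \hat{\phi}^{n+1}\,d\mathbf{x}=V_0$ from Step~4.

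Your treatment of boundedness, however, goes well beyond the paper. The paper's proof of Lemma~\ref{rem:1} establishes only the volume identity and is silent on the bound constraint, even though the lemma statement asserts both. You supply the missing half: the check that $\bar{\phi}^{n+1}\in[0,1]$ via the explicit formula $(V_0-|\{\mathring{\phi}^{n+1}=1\}|)/|D_2|$, the sandwich argument from the definition of $\theta$, and the handling of the degenerate constant case. You are also right to flag the compatibility condition $|\{\mathring{\phi}^{n+1}=1\}|\le V_0\le |\Omega|-|\{\mathring{\phi}^{n+1}=0\}|$ as a genuine hypothesis---without it $\bar{\phi}^{n+1}$ can fall outside $[0,1]$ and the limiter fails to restore the bounds. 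The paper does not address this point at all. In short, your proposal is correct and strictly more complete than the paper's own proof.
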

		\begin{proof}
			From (\ref{equ:average}) and (\ref{equ:phi_new}), we have
			\begin{align*}
				\int_\Omega\Breve{\phi}^{n+1} d\textbf{x}  &=\int_{\text{D}_1}\hat{\phi}^{n+1} d\textbf{x} + \int_{\text{D}_2}\big(\theta(\hat{\phi}^{n+1} - \bar{\phi}^{n+1}) + \bar{\phi}^{n+1} \big)d\textbf{x}\\
				& = \int_{\text{D}_1}\hat{\phi}^{n+1} d\textbf{x} + \theta \int_{\text{D}_2} \hat{\phi}^{n+1}  d\textbf{x}+(1- \theta) \int_{\text{D}_2}\bar{\phi}^{n+1}  d\textbf{x} \\
				& = \int_{\text{D}_1}\hat{\phi}^{n+1} d\textbf{x} + \theta \int_{\text{D}_2} \hat{\phi}^{n+1}  d\textbf{x}+(1- \theta) \int_{\text{D}_2}\hat{\phi}^{n+1}  d\textbf{x} = V_0.
		\end{align*}\end{proof}
		
		The algorithm for problem (\ref{equ:obj}) and (\ref{equ:pha_ela}) is summarized in Algorithm \ref{a:1}.

		\begin{algorithm}[ht!]
			\DontPrintSemicolon
			\KwIn{$\phi^0$: Initial guess, $\epsilon>0$, $\gamma >0$, $\nu$, $E_{min}$, $\beta$, $N_{max}$: the maximum number of iteration.}
			\KwOut{$\Breve{\phi} \in  \Phi$.}
			Initialize $n=0$.\\
			\While{$n < N_{max}$~~$\&$ ~~$ |J(\phi^{n+1},\textbf{u}^{n+1}) - J(\phi^n,\textbf{u}^n)| > tol$}{
				{\bf 1.} For the fixed $\phi^n$, solve \eqref{equ:dis_ela}
		to have $\textbf{u}^{n+1}$.\\
		{\bf 2.} Use $\textbf{u}^{n+1}$ to solve (\ref{equ:dis1}) to obtain $\Tilde{\phi}^{n+1}$.\\
		{\bf 3. Bounded-preserving step.}
		\[(\mathring{\phi}^{n+1},\eta^{n+1}) =
		\begin{cases}
			&(\Tilde{\phi}^{n+1}, ~0),~~0<\Tilde{\phi}^{n+1}< 1,\\
			&(0,~ -\frac{\Tilde{\phi}^{n+1}}{\Delta t}),~~\Tilde{\phi}^{n+1}\leq 0,\\
			&(1,~ \frac{1-\Tilde{\phi}^{n+1}}{-\Delta t }),~~\Tilde{\phi}^{n+1}\geq 1.
		\end{cases}\]
		{\bf 4. Volume-preserving step.}
		\[
		\hat{\phi}^{n+1} = \mathring{\phi}^{n+1} + \Psi(\textbf{x})\frac{V_0-\int_\Omega\mathring{\phi}^{n+1}d\textbf{x}}{ |\text{D}_2|}
		\]
		and
		\begin{equation*}
			\Breve{\phi} = \Psi(\textbf{x})(\theta(\hat{\phi}^{n+1} - \bar{\phi}^{n+1}) + \bar{\phi}^{n+1})+(1-\Psi(\textbf{x}))\hat{\phi}^{n+1},\end{equation*}
		where 
		\[ \theta = \min\bigg\{\bigg|\frac{1-\bar{\phi}^{n+1}}{\phi_{max} -\bar{\phi}^{n+1} }\bigg|,~~ \bigg|\frac{-\bar{\phi}^{n+1}}{\phi_{min} -\bar{\phi}^{n+1} }\bigg|,~~ 1 \bigg\}.\]
		
		Set $n = n+1$.
	}
	\caption{ Bound-preserving step and volume-preserving step.}
	\label{a:1}
\end{algorithm}

\subsection{Objective functional decaying scheme}
While the operator-splitting method described in Section~\ref{sec:splitting} effectively handles bound and volume constraints, it does not guarantee monotonic decay of the objective functional. To enforce this crucial property, we introduce an additional correction step.

The main idea of the numerical algorithm is to regard the property of dissipation rate of  the objective functional value as a nonlinear global constraint. By introducing a spatially independent Lagrange multiplier $\sigma(t)$, we correct $\Breve{\phi}^{n+1}$ to
\begin{equation}\label{equ:e1}
	\phi^{n+1} = \frac{\Breve{\phi}^{n+1} + \sigma(t)}{\int_\Omega\big( \Breve{\phi}^{n+1} + \sigma(t) \big)d\textbf{x}} V_0.
\end{equation}
such that 
\begin{equation*}
	\frac{J(\phi^{n+1},\textbf{u}^{n+1}) - J(\phi^n,\textbf{u}^n)}{\Delta t} =- \frac{1}{\Delta t^2}\|\phi^{n+1} - \phi^n\|^2.
\end{equation*}
$\phi^{n+1}$ can be corrected by the root of  the following equation:
\begin{equation}\label{equ:nonlinear}
	F(\sigma):= J(\phi^{n+1},\textbf{u}^{n+1}) - J(\phi^n,\textbf{u}^n) + \frac{1}{\Delta t} \|\phi^{n+1}-\phi^n\|^2,
\end{equation}
which can be iteratively solved by the following secant method in each iteration:
\begin{equation*}
	\sigma^{s+1} = \sigma^s -\frac{F(\sigma^s)(\sigma^s-\sigma^{s-1})}{F(\sigma^s)-F(\sigma^{s-1})},
\end{equation*}
with an initial guess of $\sigma^0$ and $\sigma^1$. 
%

\begin{remark}
	The definition of $\phi^{n+1}$ in \eqref{equ:e1} is volume-preserving but not bound-preserving, we employ the same approaches in \eqref{equ:average} and \eqref{equ:phi_new} to ensure that it preserves both bound and volume.
\end{remark}

\begin{remark}
	The existence of solution of the nonlinear system \eqref{equ:nonlinear} is difficult to analysis, but the numerical experiments in the follows imply that the secant method can always converge with the initial guesses $\sigma^0$ and $\sigma^1$. 
\end{remark}

\begin{algorithm}[ht!]
	\DontPrintSemicolon
	\KwIn{$\phi^0$: Initial guess, $\epsilon>0$, $\gamma >0$, $\nu$, $E_{min}$, $\beta$, $N_{max}$ be the maximum number of iteration, tol.}
	\KwOut{$\phi^\ast \in \mathcal{H}$.}
	Initialize $n=1$.\\
	\While{$n < N_{max}$~$\&$~~$ |J(\phi^{n+1},\textbf{u}^{n+1}) - J(\phi^n,\textbf{u}^n)| > tol$}{
		\textbf{1. } Compute $\Breve{\phi}^{n+1}$ by Algorithm \ref{a:1}.\\
		\textbf{2. Objective functional decay step.} Set $s=1$, $\sigma^0$, $\sigma^1$.\\
		\While{$J(\phi^{n+1},\textbf{u}^{n+1})>J(\phi^n,\textbf{u}^n)$}{
			Compute $\sigma^{s+1}$ by \[
			\sigma^{s+1} = \sigma^s -\frac{F(\sigma^s)(\sigma^s-\sigma^{s-1})}{F(\sigma^s)-F(\sigma^{s-1})},\]
			
			Compute $\phi^{s+1}$ by
			\[
			\phi^{s+1} = \frac{\Breve{\phi}^{n+1} + \sigma^{s+1}}{\int_\Omega\big( \Breve{\phi}^{n+1} + \sigma^{s+1} \big)d\textbf{x}} V_0.
			\]
			
			{Bound- and volume-preserving is achieved as follows:
				\[
				\text{D}_1 := \{p\in \mathcal{N},\phi^{s+1}(p)=0,~\text{or}~\phi^{s+1}(p)=1 \},
				~~\text{D}_2 := \mathcal{N}~\backslash \text{D}_1,
				\]
				\[\Breve{\phi}^{s+1}=\begin{cases}
					&\phi^{s+1},~~\text{in}~~\text{D}_1,\\
					& \theta(\phi^{s+1} - \bar{\phi}^{s+1}) + \bar{\phi}^{s+1},~~\text{in}~~\text{D}_2,
				\end{cases}\]
				where $\theta = \min\bigg\{\bigg|\frac{1-\bar{\phi}^{s+1}}{\phi_{max} -\bar{\phi}^{s+1} }\bigg|,~~ \bigg|\frac{-\bar{\phi}^{s+1}}{\phi_{min} -\bar{\phi}^{s+1} }\bigg|,~~ 1 \bigg\},~~ \bar{\phi}^{s+1} = \frac{\int_{D_2}\hat{\phi}^{s+1}d\textbf{x}}{|D_2|}.$\\}
			Set $\phi^{n+1} = \phi^{s+1}$. \\
			Solve \eqref{equ:pha_ela} to get $\textbf{u}^{n+1}$, and compute $ J(\phi^{n+1},\textbf{u}^{n+1})$.\\
			Set $s=s+1$.
		}
		Set $n = n+1$.
	}
	\caption{An objective functional decaying scheme for Algorithm \ref{a:1}.}
	\label{a:2}
\end{algorithm}

\section{Numerical experiments}\label{sec:results}
\subsection{Discretization in space}
In this section, we first introduce a fully discrete numerical scheme based on the finite element method. Let $\mathcal{T}_h$ be a family of nondegenerate, quasi-uniform partitions of $\Omega$. These partitions consist of triangles or quadrilaterals when $d=2$, or tetrahedra, prisms, or hexahedra when $d=3$. Let $\mathcal{E}_h$ be the set of all edges($d=2$) or faces($d=3$) of $\mathcal{T}_h$, $h_T$ the diameter of any element $T\in\mathcal{T}_h$. $\mathcal{E}_h^I$ is the set of interior edges or faces for $\mathcal{E}_h$. Let $U_h$ denote the standard finite element space of $d-vectors$ whose components are continuous piecewise linear polynomials,
\begin{equation*}
	U_h := \{ \textbf{v}\in L^2(\Omega)^d:~\textbf{v}|_T \in \mathcal{P}_1(T)^k,~\forall~T\in\mathcal{T}_h\}.
\end{equation*}
Let $T_i,~T_j\in\mathcal{T}_h$ and $e=\partial T_i\cap\partial T_j\in\mathcal{E}_h^I$ with the outward unit normal vector $\textbf{n}_e$ exterior to $T_i$. We denote the average and jump for $\textbf{v}\in U_h$ as follows,
\begin{equation*}
	\{\textbf{v}\} := \frac{1}{2}((\textbf{v}|_{T_i})|_e + (\textbf{v}|_{T_j})|_e),~~[\textbf{v}]:= (\textbf{v}|_{T_i})|_e -  (\textbf{v}|_{T_j})|_e.
\end{equation*}

Next, we introduce the continuous piecewise linear finite element spaces as follows,
\begin{equation*}
	V_h := \{ \psi\in H^1(\Omega): \psi\in\mathcal{P}^1(T),~\forall~T\in\mathcal{T}_h\}.
\end{equation*}
For the solutions of (\ref{equ:dis_ela}) and (\ref{equ:dis1}), we find $\textbf{u}^{n+1}\in U_h$, $\phi^{n+1}\in V_h$ such that
\begin{align*}
	&(\phi^{n+1}, \psi_h) + \Delta t \gamma\epsilon\sum_{T\in\mathcal{T}_h}(\nabla\phi^{n+1},\nabla\psi_h)_T = \sum_{T\in\mathcal{T}_h}\langle-\frac{\Delta t\gamma}{\epsilon}F'(\phi^n) + \textbf{E}'(\phi^n)\varepsilon(\textbf{u}^n):\varepsilon(\textbf{u}^n),\psi_h\rangle_T, \forall~\psi_h\in V_h,\\
	&\mathcal{A}(\textbf{u}^{n+1}_h, \textbf{v}_h) = (\textbf{f},\psi_h) + \langle\textbf{s}\cdot \psi_h\rangle_{\Gamma_T},~~\forall ~\textbf{v}_h\in U^0_h,
\end{align*}
where $\mathcal{A}$ is the bilinear form defined as
\begin{align*}
	\mathcal{A}_s(\textbf{u}^{n+1}_h, \textbf{v}_h) =& \sum_{T\in\mathcal{T}_h}(\textbf{E}(\phi^{n+1})\varepsilon(\textbf{u}^{n+1}),\varepsilon(\textbf{v}_h))_T - \sum_{e\in\mathcal{E}_h^I\cup\Gamma_D}\langle\{\textbf{E}(\phi^{n+1})\varepsilon(\textbf{u}^{n+1})\cdot \textbf{n}_e\},[\textbf{v}_h] \rangle_e\nonumber\\
	&- \sum_{e\in\mathcal{E}_h^I\cup\Gamma_D}\langle\{\textbf{E}(\phi^{n+1})\varepsilon(\textbf{v}_h)\cdot \textbf{n}_e\},[\textbf{u}^{n+1}] \rangle_e + \sum_{e\in\mathcal{E}_h^I}\frac{\theta_1}{h_T}\langle [\textbf{u}^{n+1}],[\textbf{v}_h] \rangle_e.
\end{align*}

\begin{remark}
	To address the locking phenomenon that arises when the Poisson's ratio $\nu$ approaches $0.5$ in 3D or $1$ in 2D from \eqref{equ:lame}, we utilize the discontinuous Galerkin finite element method for solving the elasticity equations. When we set $\nu=0.3$, conforming finite element methods remain a viable alternative for obtaining the solution.
\end{remark}

\subsection{2D examples} \label{sec:2dexample}

We demonstrate the unconditional objective functional decay and robustness of our method through the following five classical benchmark problems in topology optimization. 

{\it Example 1.} [Cantilever Beam Variations]
\begin{itemize}
	\item \textbf{Case 1} (Figure \ref{Exa} (a)): Domain $\Omega=(0,2)\times(0,1)$ with Dirichlet condition: $\mathbf{u}=\mathbf{0}$ on $\{0\}\times[0,1]$, Neumann condition: $\mathbf{s}=(0,-1)^\top$ at $\{2\}\times[0.45,0.55]$ and traction-free elsewhere.
	
	\item   \textbf{Case 2} (Figure \ref{Exa} (b)):  Modified boundary conditions: fixed supports at $\Gamma_T=[0,0.05]\times\{0\}$ and $\Gamma_T=[1.95,2]\times\{0\}$ and traction $\mathbf{s}=(0,-1)^\top$ at $[0.95,1.05]\times\{0\}$.
	
	\item \textbf{Case 3} (Figure \ref{Exa} (c)): Modified from Case 1 with  traction $\mathbf{s}=(0,-1)^\top$ distributed over $\Gamma_T=[1.9,2]\times\{0\}$.
	
\end{itemize}

{\it Example 2.} [Bridge Design] (Figure \ref{Exa} (d))

Domain $\Omega=(0,2)\times(0,1)$ with non-structural mass on $[0,2]\times\{1\}$, fixed supports at $[0,0.05]\times\{0\}$ and $(1.95,2)\times\{0\}$, and body force $\mathbf{f}=(0,-0.1)^\top$ representing gravitational load.

{\it Example 3.} [Curved Domain] (Figure \ref{Exa} (e))

Domain bounded by line segments $\{0\}\times[1,2]$ and $\{3\}\times[-1,-2]$, and two smooth curves, with boundary conditions $\mathbf{u}=\mathbf{0}$ on left arc and $\mathbf{s}=(0,-1)^T$ on $\{3\}\times[-1.9,-2]$. Each of these curves is represented by a cubic B$\acute{e}$zier curve. The upper boundary curve is determined by a set of control and end points, specifically $(0,2), (2.5,1.5), (0.8,-1)$, and $(3,-1)$. Similarly, the lower boundary curve is defined by another set of points, namely $(0,1), (1.5,0.5), (0,-2)$, and $(3,-2)$.

In all examples, the material is assumed to be isotropic with a Young's modulus $E=\frac{100}{91}\approx 1.1$,  Poisson's ration $\nu = \frac{3}{7}\approx 0.43$, $E_{min} = 10^{-4} $, and $p=3$, unless otherwise specified. The stopping criteria is the maximum value of $T$ and the tolerance of the difference in the objective function values between two consecutive steps, the initial guess $\sigma^0 = -0.5$, $\sigma^1 = 0$, and a projection by the threshold $0.5$ is used for the presentation of the results after the iteration stops.

\begin{figure}[htbp]
	\centering
	\begin{minipage}[t]{0.6\textwidth} 
		\centering
		\begin{minipage}[t]{0.48\linewidth}
			\centering
			\includegraphics[width=\linewidth,trim=5cm 12cm 4cm 8cm,clip]{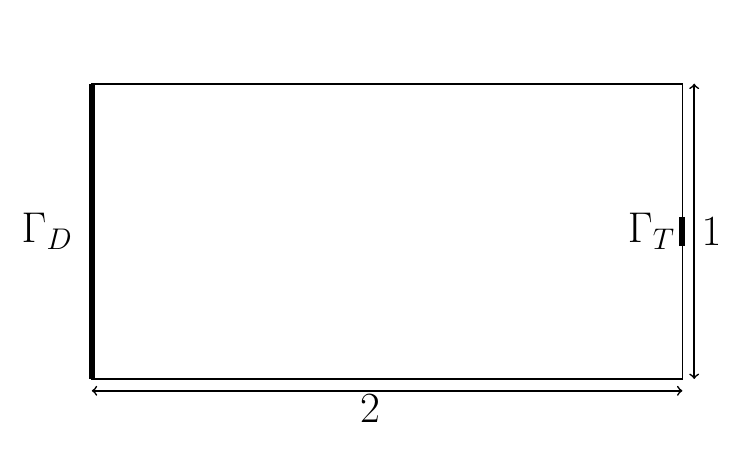}
			\captionsetup{skip=0pt}
			\caption*{(a)}
			\label{fig:m1}
		\end{minipage}
		\hfill
		\begin{minipage}[t]{0.48\linewidth}
			\centering
			\includegraphics[width=\linewidth,trim=5cm 12cm 4cm 8cm,clip]{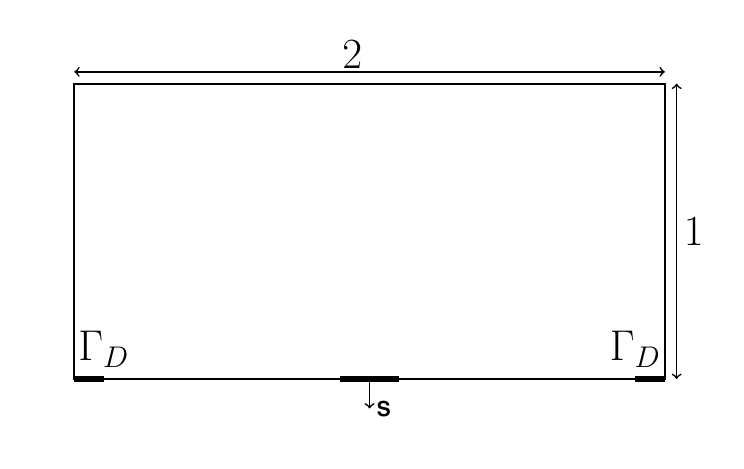}
			\captionsetup{skip=0pt}
			\caption*{(b)}
			\label{fig:m2}
		\end{minipage}
		\par\vspace{0em}
		\begin{minipage}[t]{0.48\linewidth}
			\centering
			\includegraphics[width=\linewidth,trim=5cm 12cm 4cm 8cm,clip]{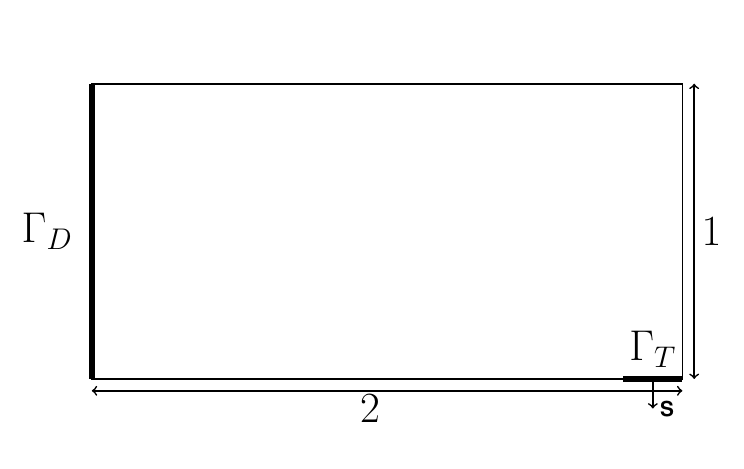}
			\captionsetup{skip=0pt}
			\caption*{(c)}
			\label{fig:m3}
		\end{minipage}
		\hfill
		\begin{minipage}[t]{0.48\linewidth}
			\centering
			\includegraphics[width=\linewidth,trim=5cm 12cm 4cm 8cm,clip]{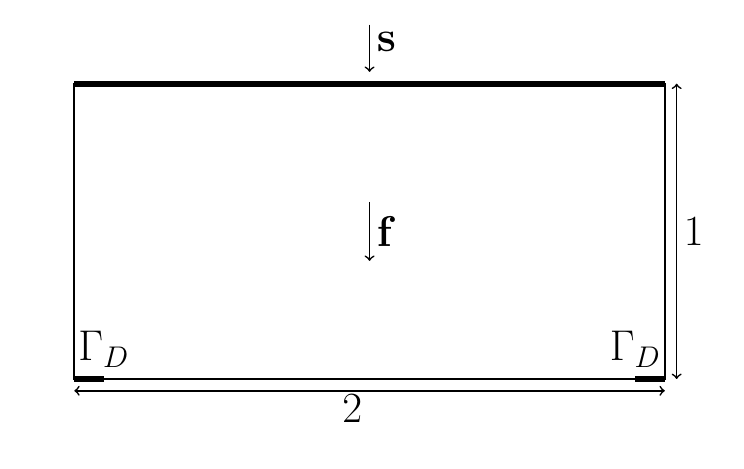}
			\captionsetup{skip=0pt}
			\caption*{(d)}
			\label{fig:m5}
		\end{minipage}
	\end{minipage}
	\hfill
	\begin{minipage}[t]{0.36\textwidth} 
		\centering
		\vspace{-75pt}
		\includegraphics[width=\linewidth,trim=5cm 19cm 12cm 4.1cm,clip]{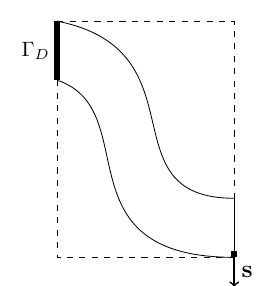}
		\captionsetup{skip=0pt}
		\caption*{(e)}
		\label{fig:m4}
	\end{minipage}
	\caption{Schematic illustration of the geometric structure, loading, and boundary conditions. (a) A cantilever beam with force at the middle of right edge \cite{YU2021126267,JIN2024112932}. (b) A cantilever beam with force at the corner \cite{CiCP-33-4}. (c) A cantilever beam with force at the middle of bottom edge and the two corners of the bottom edge being fixed \cite{YU2021126267,JIN2024112932}. (d) Bridge structure \cite{bruyneel2005note,XU201338}. (e) The curved domain with force in the bottom of the right edge \cite{YU2021126267,JIN2024112932}. See Section~\ref{sec:2dexample}.}\label{Exa}
\end{figure}

\subsubsection{Properties of Algorithm \ref{a:2}} \label{sec:property}


We investigate the effectiveness and robustness of Algorithm \ref{a:2} by employing the boundary conditions specified in Case 1 of Example 1, as depicted in Figure \ref{Exa} (a). The computational domain is discretized using a mesh of $400\times 200$.

{\bf The objective functional decaying property.} We begin by examining the objective functional decay properties of Algorithm~\ref{a:2}. Figure~\ref{fig:evo1} compares the evolution of the objective functional $\mathcal{J}(\phi,\mathbf{u})$ for Algorithms~\ref{a:1} and~\ref{a:2}, using a uniform initial distribution $\phi^0(\mathbf{x}) \equiv \beta$ with $\Delta t=0.06$, $\gamma = 0.2$, $\epsilon = 0.01$, $\beta = 0.4$, and $T=6$. This comparison reveals three key distinctions: 1. Algorithm~\ref{a:2} exhibits a strictly monotonic decrease in the compliance functional, achieving a final value of $\mathcal{J}_{\text{final}} = 0.98$---lower than the $1.08$ attained by Algorithm~\ref{a:1}; 2. The optimized material distribution from Algorithm~\ref{a:2} displays more intricate load-bearing structures, with additional major branches evident in the resulting topology; 3. During optimization, Algorithm~\ref{a:2} automatically activated its objective functional correction mechanism multiple times, ensuring monotonic decay throughout the iteration process.

In addition, the step size sensitivity study reveals important stability characteristics. When reducing $\Delta t$ to $0.05$, both algorithms converge to similar topological configurations as displayed in Figure~\ref{fig:evo2}. However, Algorithm~\ref{a:1} exhibits persistent oscillations in the objective functional around steady state. In contrast, Algorithm~\ref{a:2} maintains strict monotonic decay.

\begin{figure}[ht!]
	\centering
	\includegraphics[width=0.45\linewidth,trim=2cm 1.5cm 3cm 2cm,clip]{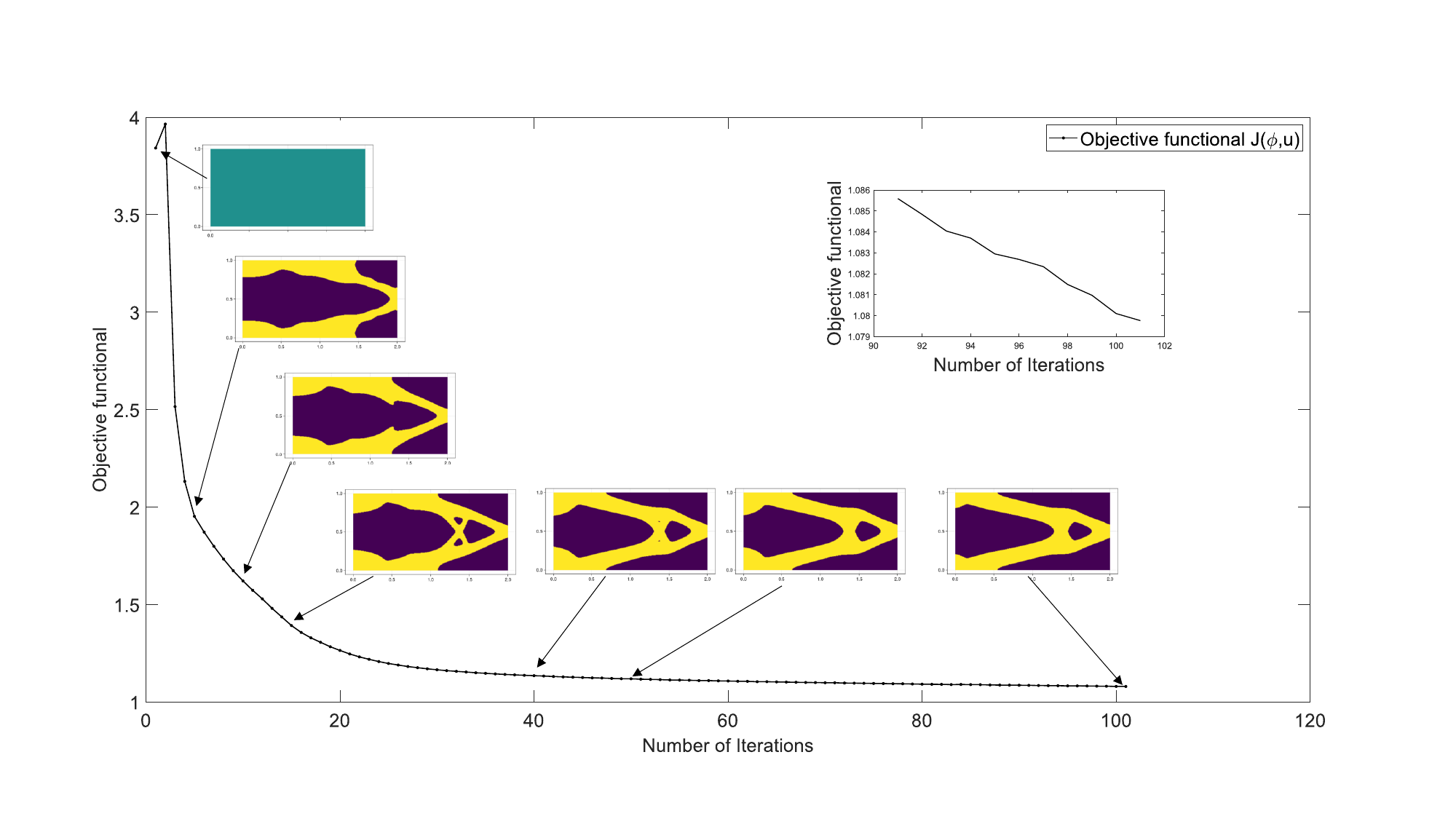}
	\includegraphics[width=0.45\linewidth,trim=2cm 1.5cm 3cm 2cm,clip]{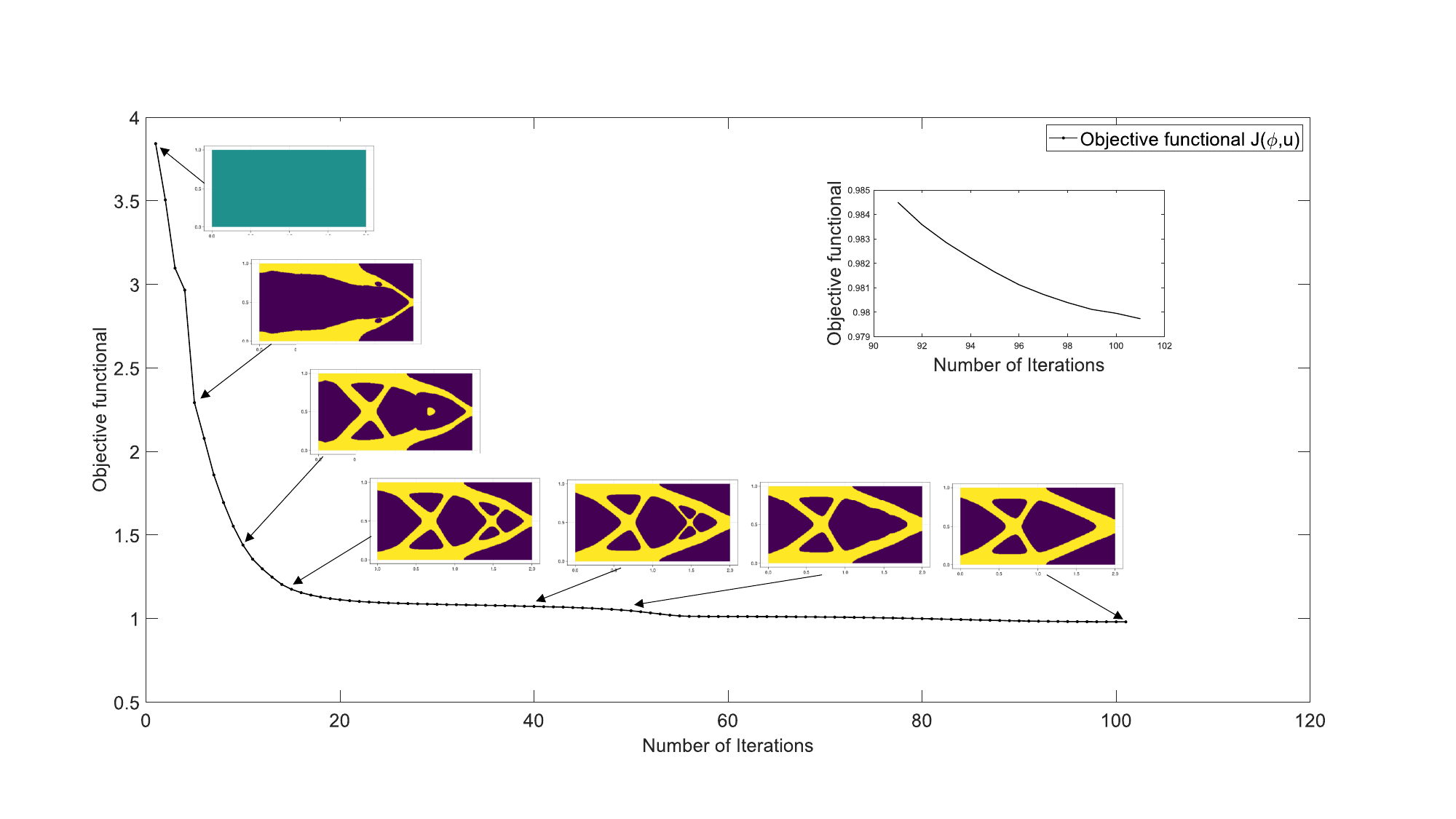}
	\caption{Evolution of the approximate solutions $\phi$ and the objective functional values during iterations with $\Delta t =0.06$ using Algorithm \ref{a:1} (left) and Algorithm \ref{a:2} (right). See Section~\ref{sec:property}.}
	\label{fig:evo1}
\end{figure}

\begin{figure}[ht!]
	\centering
	\includegraphics[width=0.45\linewidth,trim=2cm 1.5cm 3cm 2cm,clip]{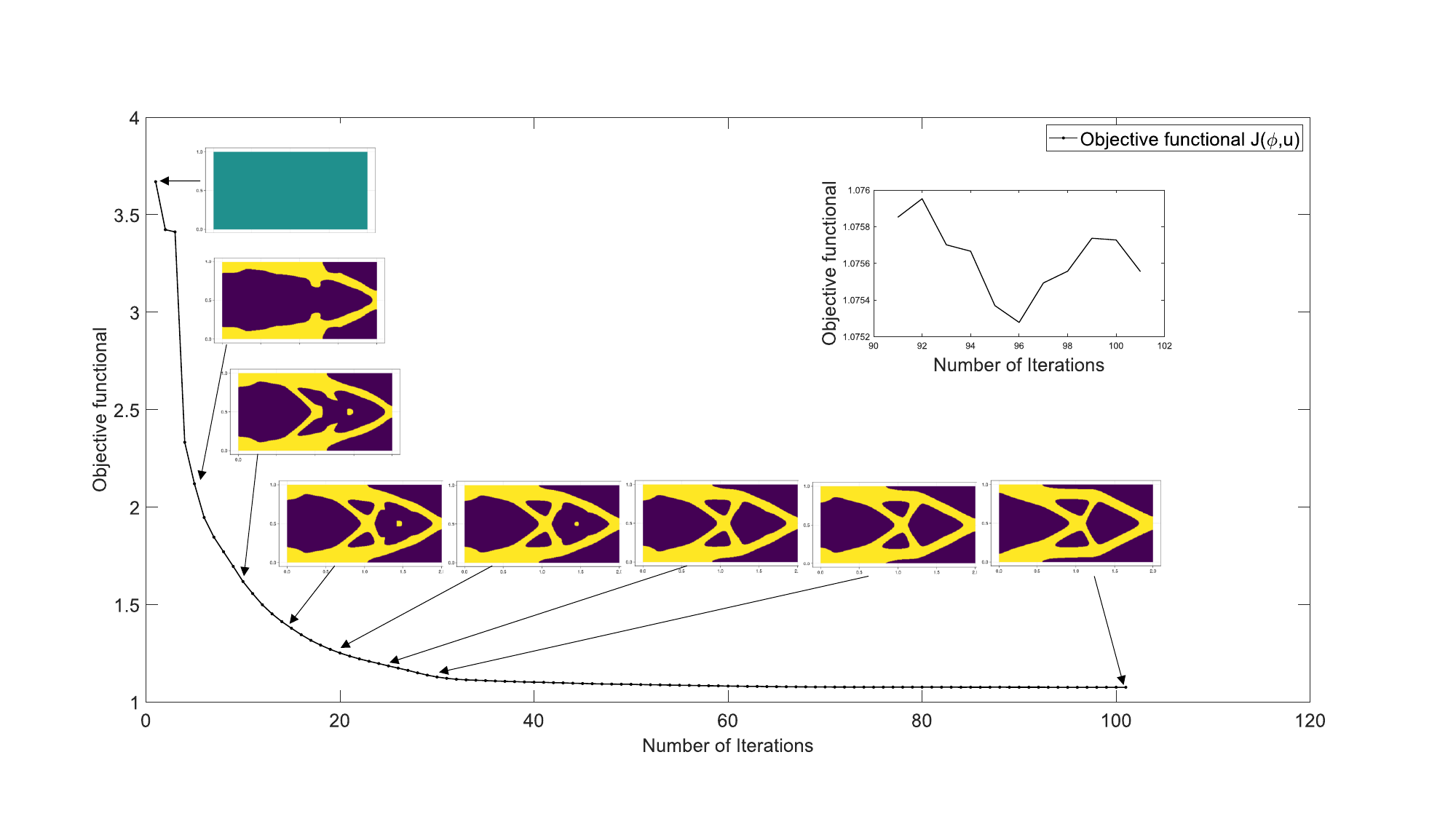}
	\includegraphics[width=0.45\linewidth,trim=2cm 1.5cm 3cm 2cm,clip]{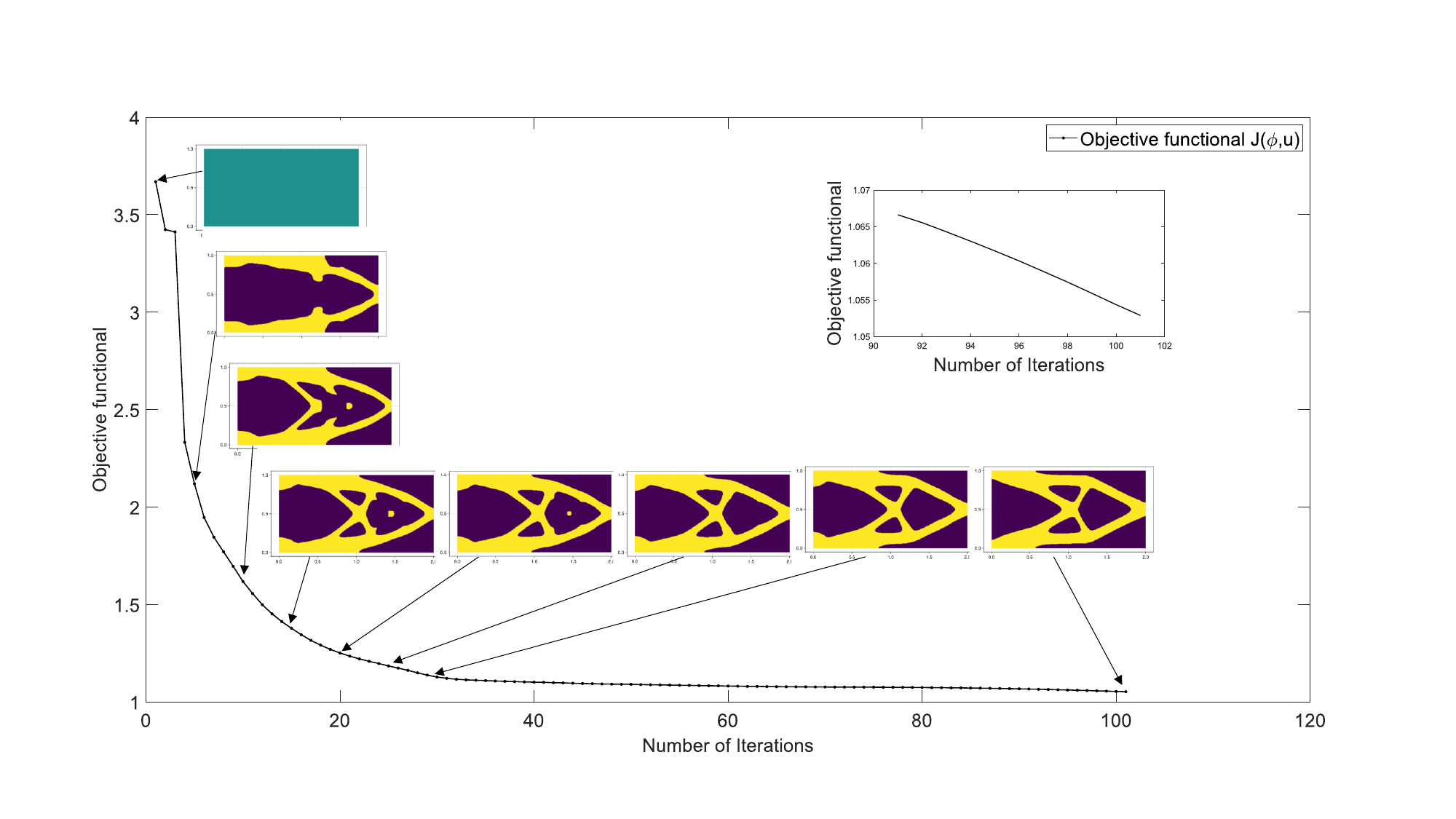}
	\caption{Evolution of the approximate solutions $\phi$ and the objective functional values during iterations with $\Delta t =0.05$ using Algorithm \ref{a:1} (left) and Algorithm \ref{a:2} (right). See Section~\ref{sec:property}.}
	\label{fig:evo2}
\end{figure}

{\bf Bound- and volume-preserving.} Using the same test with $\Delta t=0.05$, $\gamma = 0.2$, $\epsilon = 0.01$, $\beta = 0.4$, and $T=5$, we quantitatively verify the constraint-preserving properties of Algorithm~\ref{a:2}. Figure~\ref{fig:evo_volume} demonstrates that the volume fraction remains strictly conserved throughout all iterations, while the phase field function maintains its prescribed bounds ($\phi_{\min} \leq \phi \leq \phi_{\max}$) without violation. These results confirm Algorithm~\ref{a:2} successfully enforces all constraints during optimization.

\begin{figure}[htbp]
	\centering
	\includegraphics[width=0.4\linewidth,trim=1.7cm 8.7cm 1.7cm 8.5cm,clip]{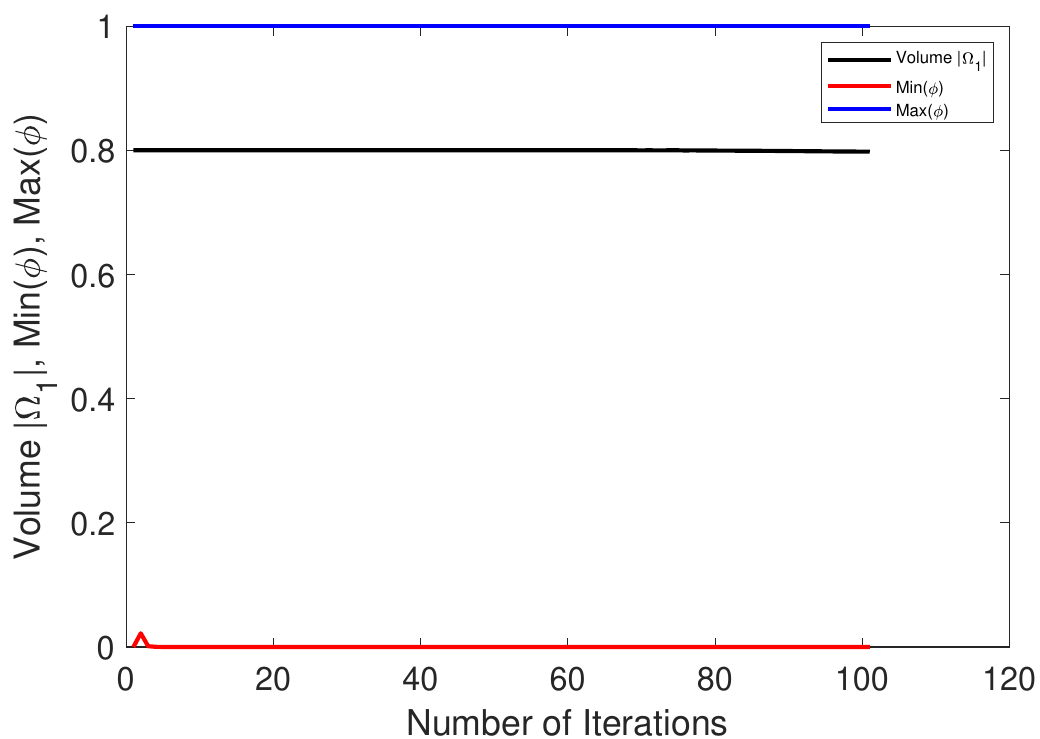}
	\caption{The volume $|\Omega_1|$, $\max\{\phi\}$ and $\min\{\phi\}$ with constant initial distribution computed by Algorithm \ref{a:2}. See Section~\ref{sec:property}.}
	\label{fig:evo_volume}
\end{figure}

{\bf Poisson's ratio approaches $1$.} 
We further investigate the performance as the Poisson's ratio approaches the incompressible limit ($\nu \to 1$) in 2D. The results demonstrate that the discontinuous Galerkin finite element method effectively prevents volumetric locking while Algorithm~\ref{a:2} maintains stable evolution of the phase field $\phi$. 

Figure~\ref{fig:possion} shows the converged material distribution and objective functional values for parameters $\nu = 0.96$, $E = 1.32$, $\gamma = 0.1$, $\beta = 0.3$, $\epsilon = 0.01$, and $\Delta t = 0.01$ on a $400 \times 200$ grid with uniform random initialization. The solution exhibits stable convergence of $\phi$ with monotonic decrease of the objective functional throughout all iterations.

\begin{figure}[htbp]
	\centering
	\includegraphics[width=0.45\linewidth,trim=1.2cm 18cm 4cm 3cm,clip]{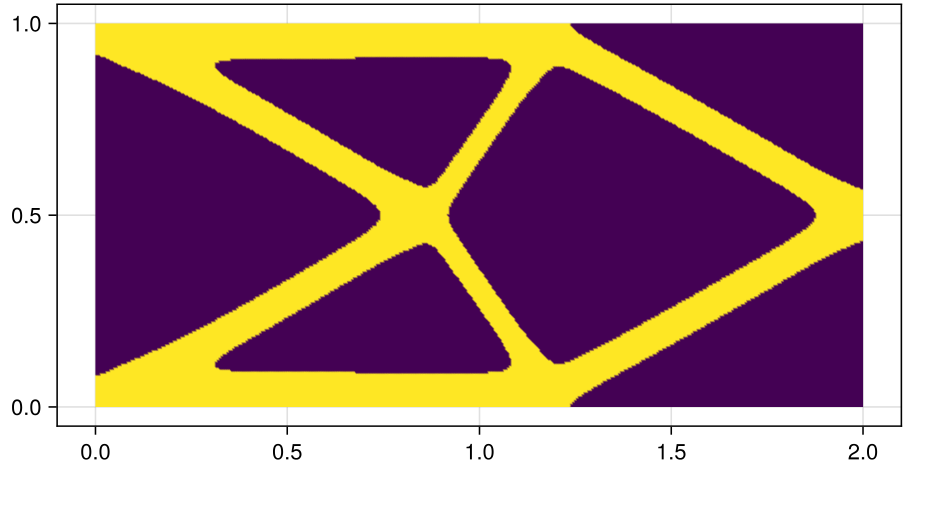}
	\includegraphics[width=0.45\linewidth,trim=1cm 8.5cm 3cm 9.8cm,clip]{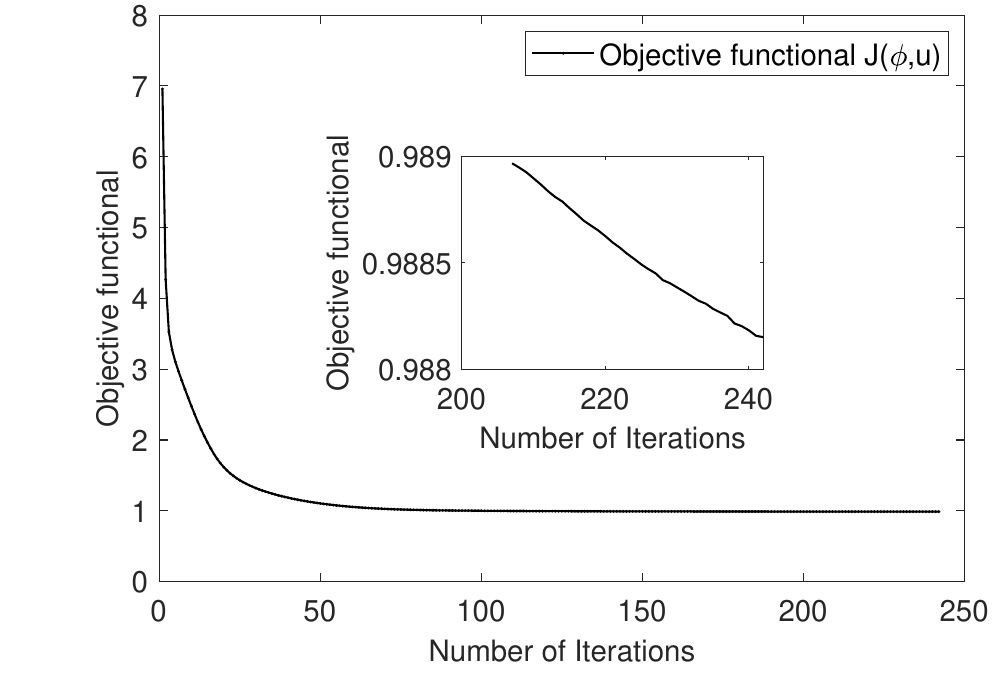}
	\caption{The approximate optimal solutions of $\phi$ and the objective functional decaying curve with Possion's ration $\nu = 0.96$ and Young's modulus $E=1.32$. See Section~\ref{sec:property}.}
	\label{fig:possion}
\end{figure}

{\bf Effect of the mesh size.} 
Figure~\ref{fig:E1_mesh} presents the optimized material distributions $\phi$ and corresponding objective functional decay across various grid resolutions. The results demonstrate excellent stability of the $\phi$ solutions under mesh refinement, with the objective functional maintaining consistent decay profiles regardless of grid size. This robust behavior confirms the algorithm's mesh independence, as both solution quality and convergence characteristics remain unaffected by discretization changes.

\begin{figure}[htbp]
	\centering
	\includegraphics[width=0.3\linewidth,trim=1.2cm 18cm 4cm 3cm,clip]{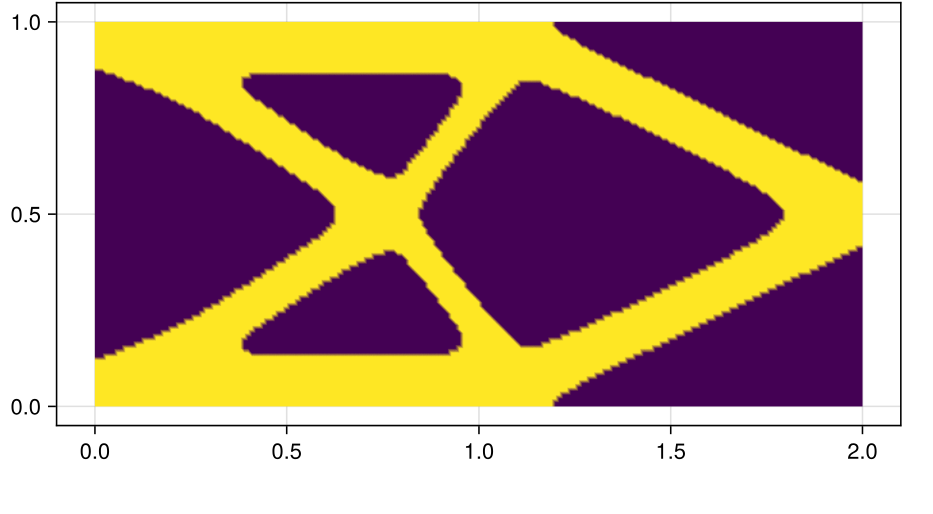}
	\includegraphics[width=0.3\linewidth,trim=1.2cm 18cm 4cm 3cm,clip]{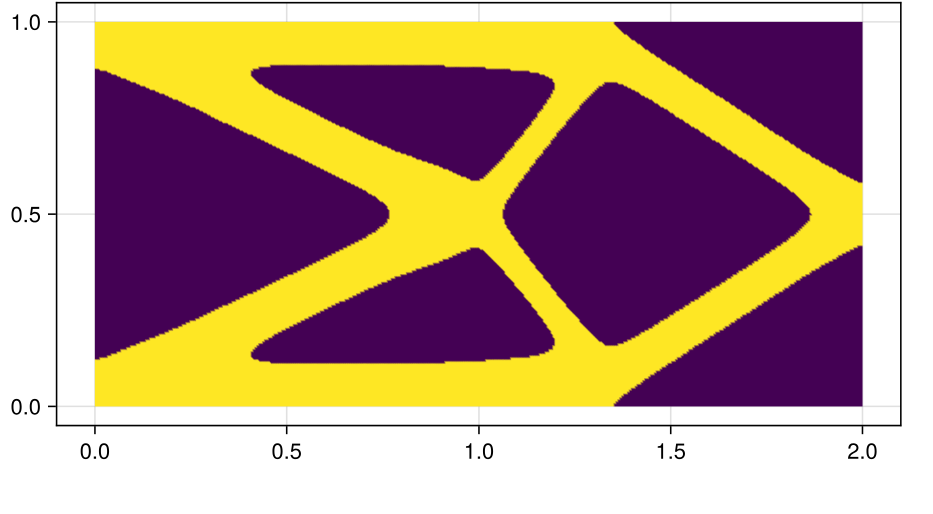}
	\includegraphics[width=0.3\linewidth,trim=1.2cm 18cm 4cm 3cm,clip]{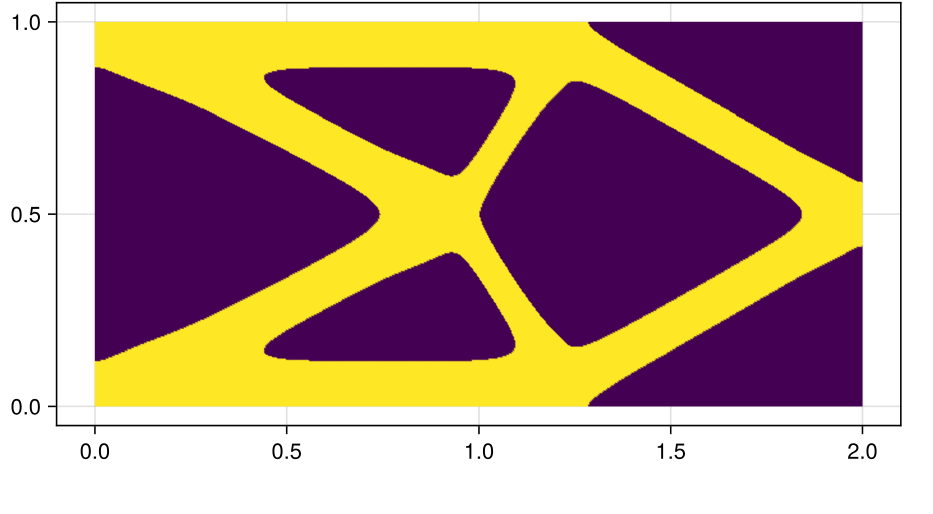}\\
	\includegraphics[width=0.3\linewidth,trim=2cm 8.5cm 3cm 9.8cm,clip]{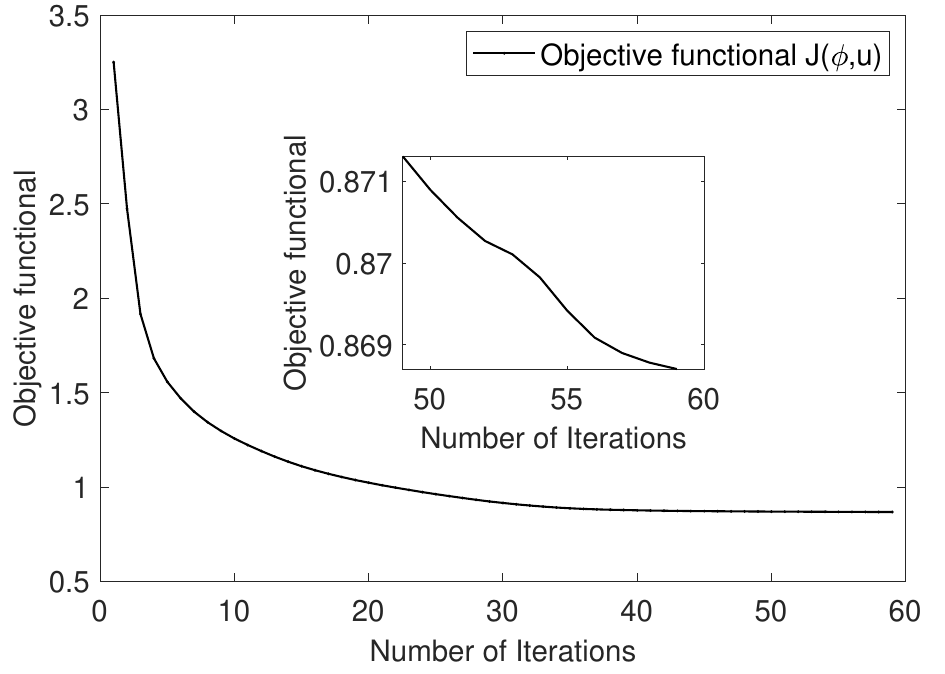}
	\includegraphics[width=0.3\linewidth,trim=2cm 8.5cm 3cm 9.8cm,clip]{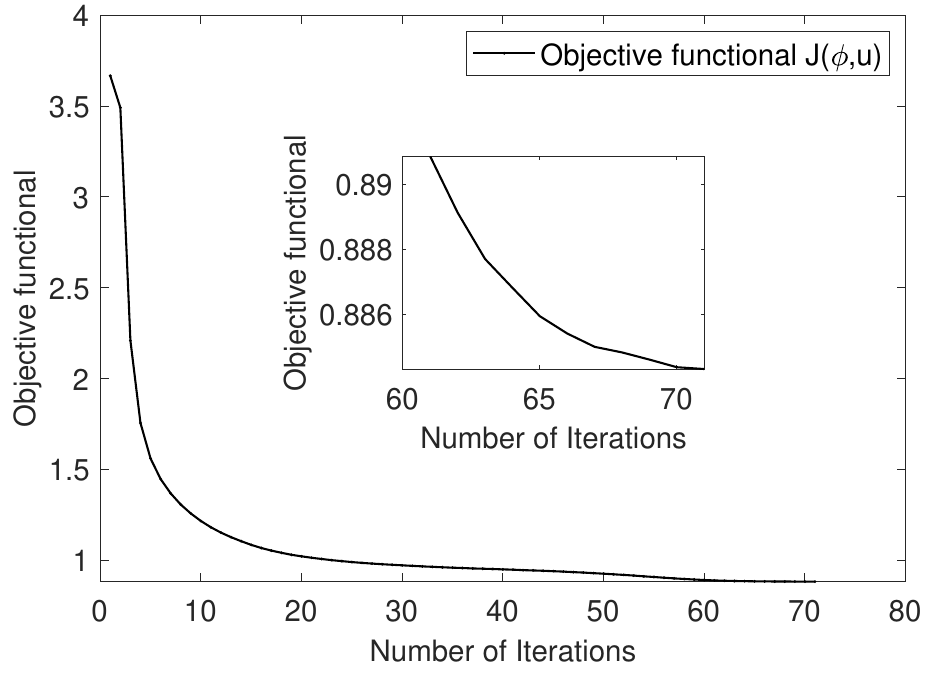}
	\includegraphics[width=0.3\linewidth,trim=2cm 8.5cm 3cm 9.8cm,clip]{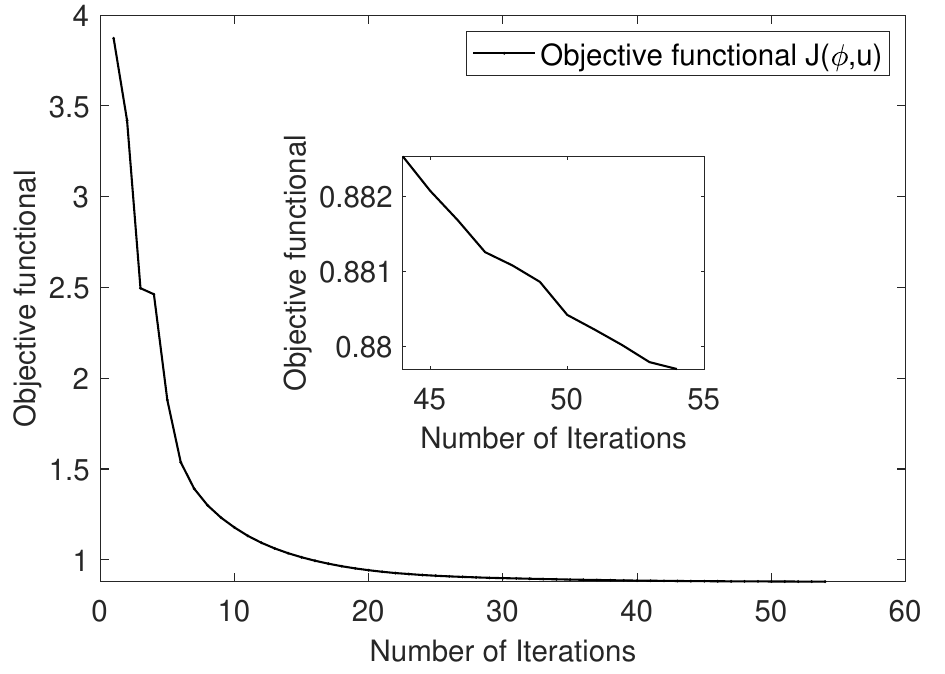}
	\caption{Effects of mesh on the approximate optimal solutions of $\phi$ and objective functional values with $\gamma = 0.1$, $\beta=0.4$, $\epsilon = 0.01$, $\Delta t = 0.05$, $T=5$ and a uniform random initial distribution of $\phi$. From left to right,  $200\times 100, 400\times 200, 600\times 300$ grids are used, respectively. See Section~\ref{sec:property}.}
	\label{fig:E1_mesh}
\end{figure}

\subsubsection{Profile dependency on parameters} \label{sec:profile}

In this section, we investigate the dependence of the optimal profile on the parameters ($\gamma$, $\beta$, $\epsilon$) using the cantilever problem illustrated in Figure~\ref{Exa}(b). All simulations employ fixed parameters $\Delta t = 0.01$, $T = 5$, and a $400 \times 200$ computational mesh.
	
{\bf Effect of the weighting parameter $\gamma$.} Figure~\ref{fig:E3_gamma} presents the approximate optimal solutions for $\phi$ with $\gamma = 0.05, 0.01, 0.005$. The results demonstrate that smaller values of $\gamma$ produce finer structural details in the optimized profile, confirming its role as a geometric resolution control parameter.
	
{\bf Effect of volume fraction $\beta$.} Figure~\ref{fig:E3_volume} shows the optimal $\phi$ solutions and corresponding objective functional values for $\beta = 0.1, 0.2, 0.3$. We observe that while larger $\beta$ values yield thicker structural members, the essential topological features remain qualitatively similar. This suggests our algorithm robustly preserves the characteristic design patterns across different material constraints, with $\beta$ mainly influencing structural scale rather than topological configuration.

{\bf Effect of the interface thickness $\epsilon$.} The interface thickness parameter $\epsilon$ critically governs phase field evolution dynamics. As demonstrated in Figure~\ref{fig:E3_epsilon}, smaller $\epsilon$ values ($\epsilon \to 0$) generate sharper material interfaces and increased hole density, while larger values produce smoother transitions. Notably, the optimization process maintains excellent convergence properties across all $\epsilon$ values, confirming that this parameter primarily controls geometric refinement without affecting solution feasibility.
	
	\begin{figure}[htbp]
		\centering
		\includegraphics[width=0.3\linewidth,trim=1.2cm 18cm 4cm 3cm,clip]{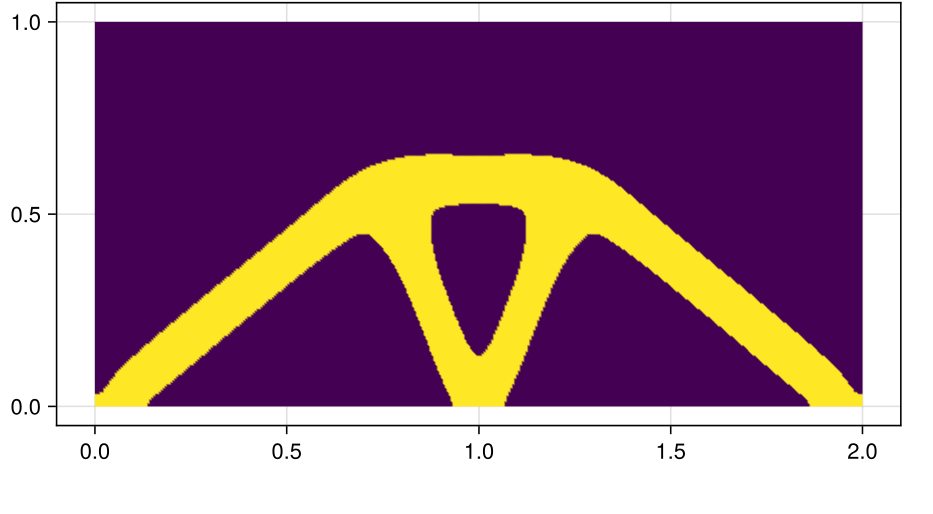}
		\includegraphics[width=0.3\linewidth,trim=1.2cm 18cm 4cm 3cm,clip]{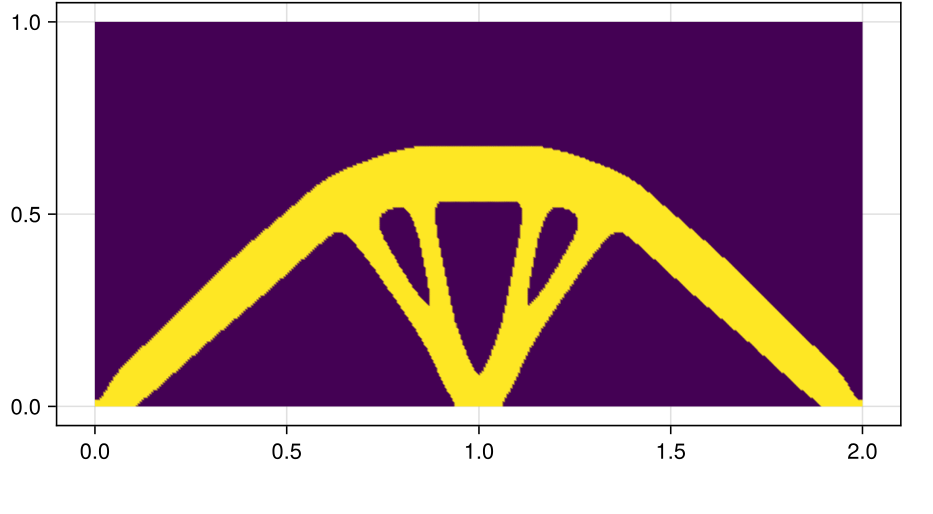}
		\includegraphics[width=0.3\linewidth,trim=1.2cm 18cm 4cm 3cm,clip]{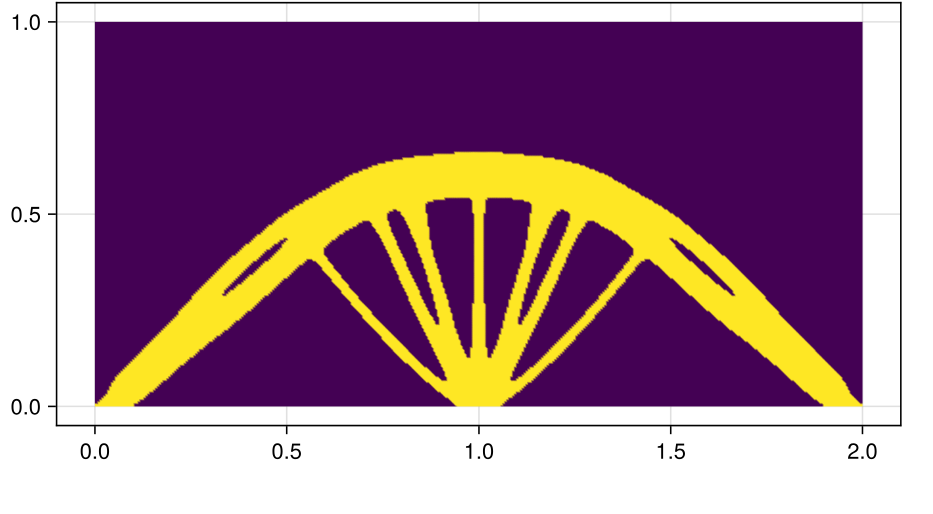}
		\caption{Effects of $\gamma$ on the approximate optimal solutions of $\phi$ on a $400\times 200$ grid with $\beta=0.2$, $\epsilon = 0.01$, $\Delta t = 0.01$, $T=5$ and and a uniform random initial distribution of $\phi$. From left to right, $\gamma = 0.05,~0.01,~0.005$ are used, respectively.  See Section~\ref{sec:profile}.}
		\label{fig:E3_gamma}
	\end{figure}
	
	\begin{figure}[htbp]
		\centering
		\includegraphics[width=0.3\linewidth,trim=1.2cm 18cm 4cm 3cm,clip]{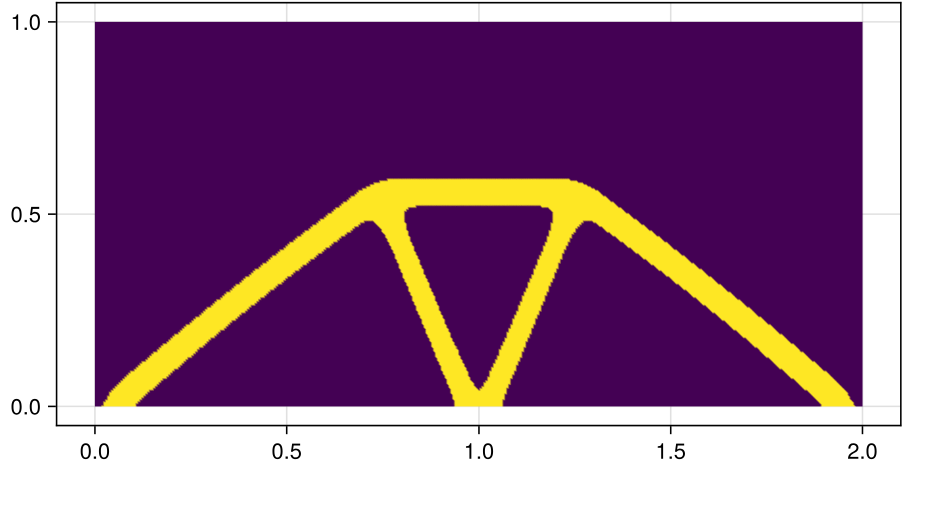}
		\includegraphics[width=0.3\linewidth,trim=1.2cm 18cm 4cm 3cm,clip]{pic/m3/gamma/gamma5.pdf}
		\includegraphics[width=0.3\linewidth,trim=1.2cm 18cm 4cm 3cm,clip]{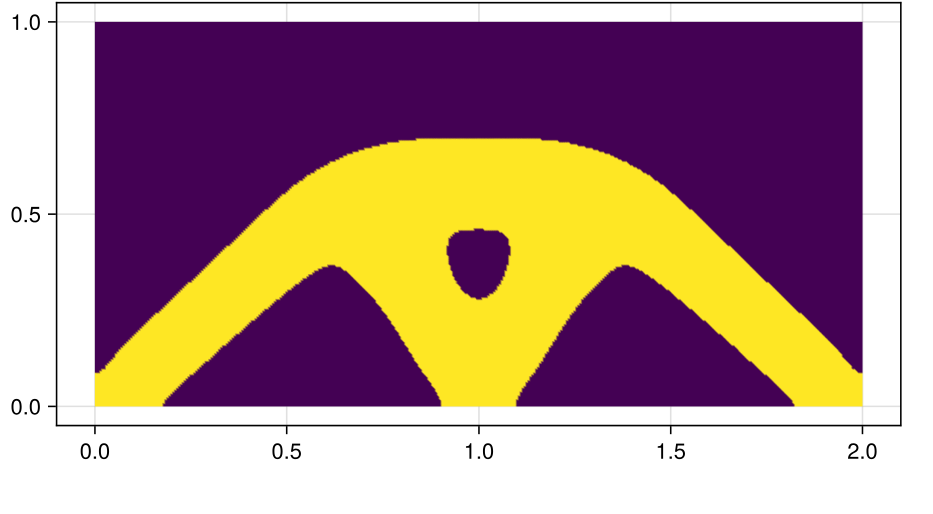}
		\caption{Effects of volume on the approximate optimal solutions of $\phi$ on a $400\times 200$ grid with $\gamma=0.05$, $\epsilon = 0.01$, $\Delta t = 0.01$, $T=5$ and a uniform random initial distribution of $\phi$. From left to right, $\beta = 0.1,~0.2,~0.3$ are used, respectively. See Section~\ref{sec:profile}.}
		\label{fig:E3_volume}
	\end{figure}
	
	\begin{figure}[htbp]
		\centering
		\includegraphics[width=0.3\linewidth,trim=1.2cm 18cm 4cm 3cm,clip]{pic/m3/gamma/gamma5.pdf}
		\includegraphics[width=0.3\linewidth,trim=1.2cm 18cm 4cm 3cm,clip]{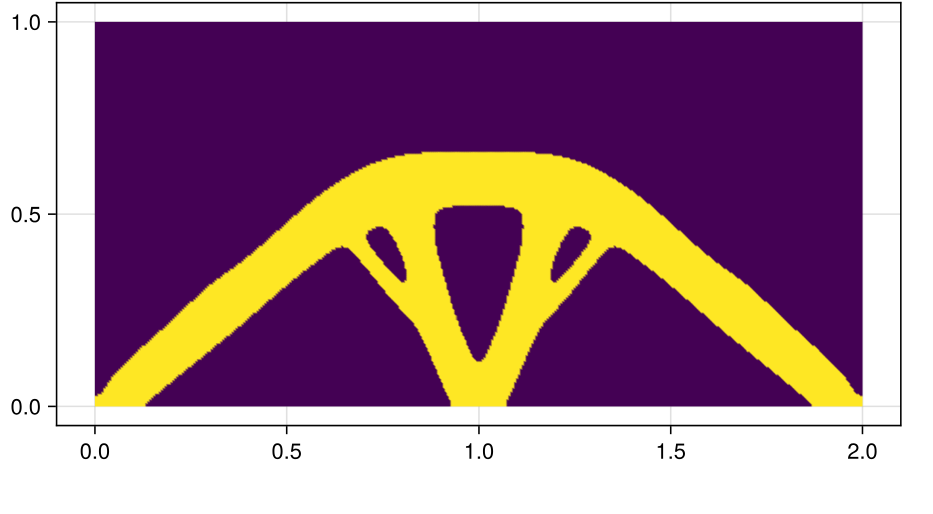}
		\includegraphics[width=0.3\linewidth,trim=1.2cm 18cm 4cm 3cm,clip]{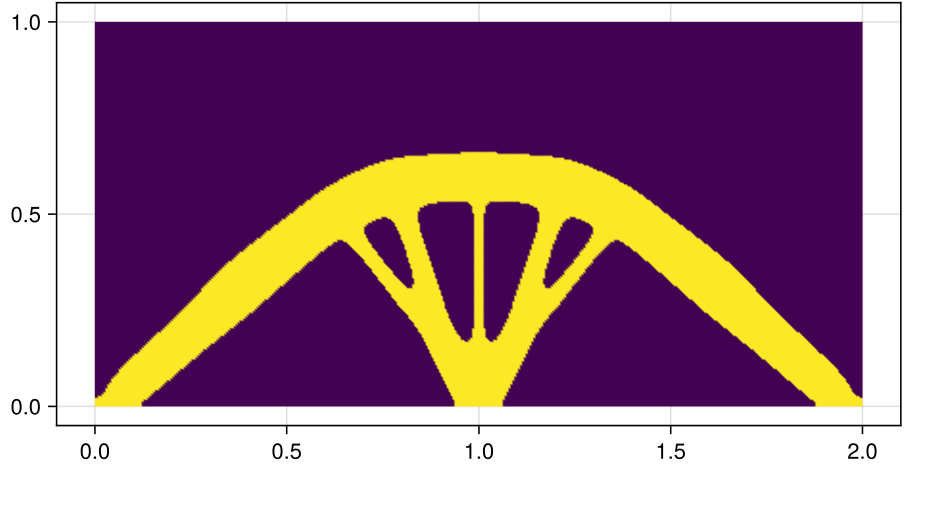}
		\caption{Effects of $\epsilon$ on the approximate optimal solutions of $\phi$ on a $400\times 200$ grid with $\gamma=0.05$, $\beta = 0.2$, $\Delta t = 0.01$, $T=5$ and a uniform random initial distribution of $\phi$. From left to right, $\epsilon = 0.01, ~0.005,~0.003$ are used, respectively. See Section~\ref{sec:profile}.}
		\label{fig:E3_epsilon}
	\end{figure}

\subsubsection{More classical benchmark problems}\label{sec:more}
	In this section, we evaluate our approach on additional classical problems, as illustrated in Figure \ref{Exa}. Figure \ref{fig:E2_volume} presents the approximate optimal solutions for the variable $\phi$ under the following parameters: $\gamma = 0.2$, $\epsilon = 0.01$ (left) and $0.025$ (right), $\Delta t = 0.01$, $\beta = 0.4$, and $T = 1$. The simulations were performed on a $400 \times 200$ grid with a constant initial distribution of $\phi^0 = 0.8$.
	
	\begin{figure}[htbp]
		\centering
		\includegraphics[width=0.3\linewidth,trim=1.2cm 18cm 4cm 3cm,clip]{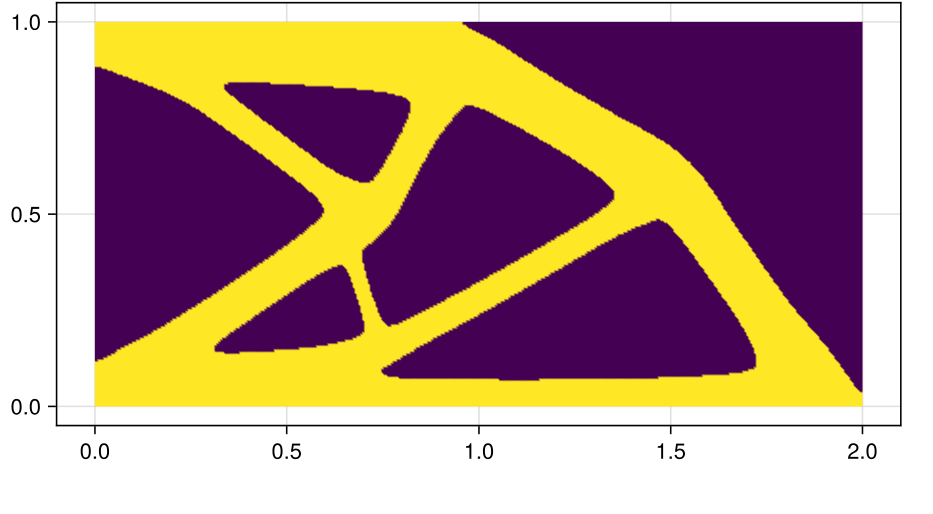}\ \ \ \ \ \ \ \ \ \ 
		\includegraphics[width=0.3\linewidth,trim=1.2cm 18cm 4cm 3cm,clip]{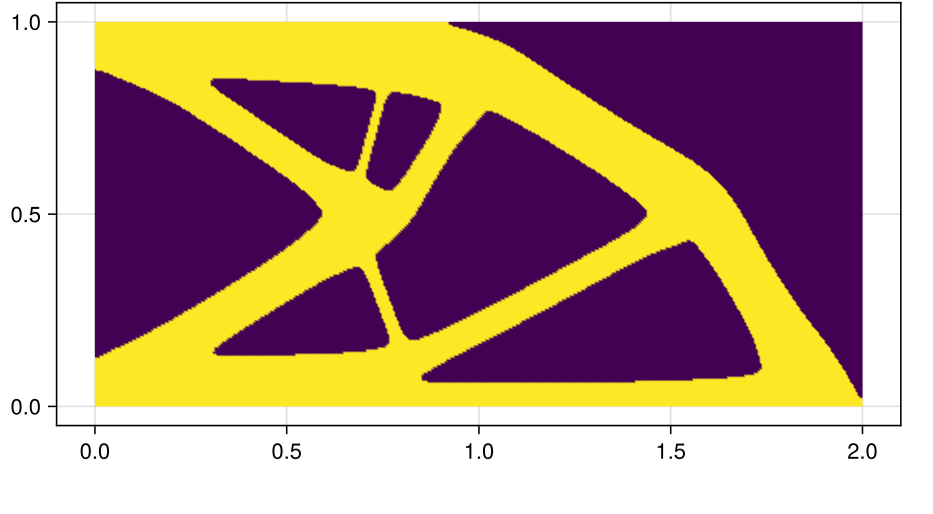}
		\caption{The approximate optimal solutions of $\phi$ with $\gamma=0.2$, $\beta = 0.4$, $\Delta t = 0.01$, $T=1$, $\epsilon = 0.01$ (left) and $0.025$ (right), and same constant initial distribution $\phi^0 = 0.8$ on a $400\times200$ grid. See Section~\ref{sec:more}.}
		\label{fig:E2_volume}
	\end{figure}

We investigate the structural optimization of a bridge configuration, as shown in Figure \ref{Exa}(d). The simulation incorporates the roadway weight effect through a non-structural distributed load applied at the bridge deck level. The initial distribution of $\phi$ is illustrated in Figure \ref{self_weight}, with the parameters set to $\Delta t = 0.002$ and $T = 1$. 

Figure \ref{fig:E5_bodyforce} demonstrates how varying traction forces affect the optimal solutions for $\phi$. As the traction increases, we observe progressively more branched solution patterns. This branching behavior indicates that higher traction forces lead to both greater morphological complexity in the solutions and increased challenges in the optimization convergence. The results clearly show that mechanical loading conditions play a critical role in determining both the solution characteristics and the computational behavior of the structural optimization process. As the surface tractions $\textbf{s}$ increases, the value of the objective functional after stabilization correspondingly increases.


	\begin{figure}[htbp]
		\centering
		\includegraphics[width=0.3\linewidth,trim=1.2cm 18cm 4cm 3cm,clip]{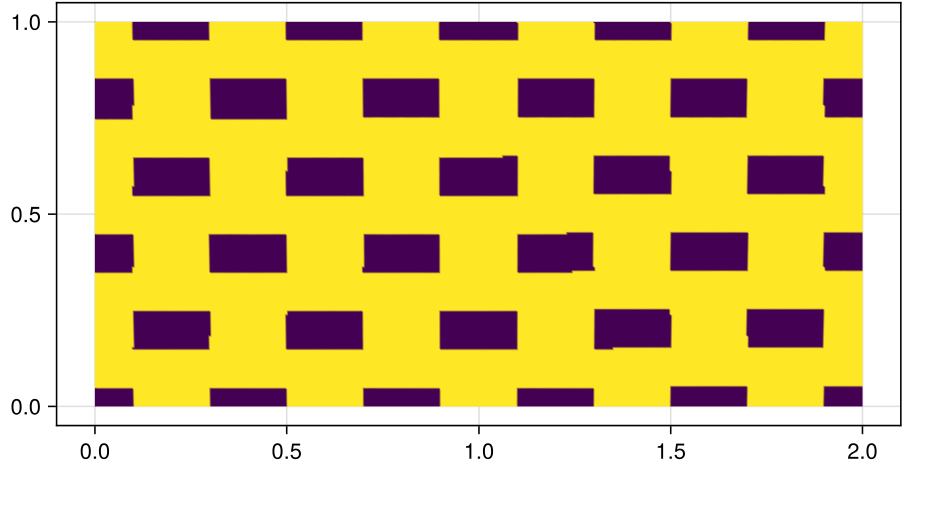}
		\caption{The initial distribution of $\phi$. See Section~\ref{sec:more}.}
		\label{self_weight}
	\end{figure}

	\begin{figure}[htbp]
		\centering
		\includegraphics[width=0.3\linewidth,trim=1.2cm 18cm 4cm 3cm,clip]{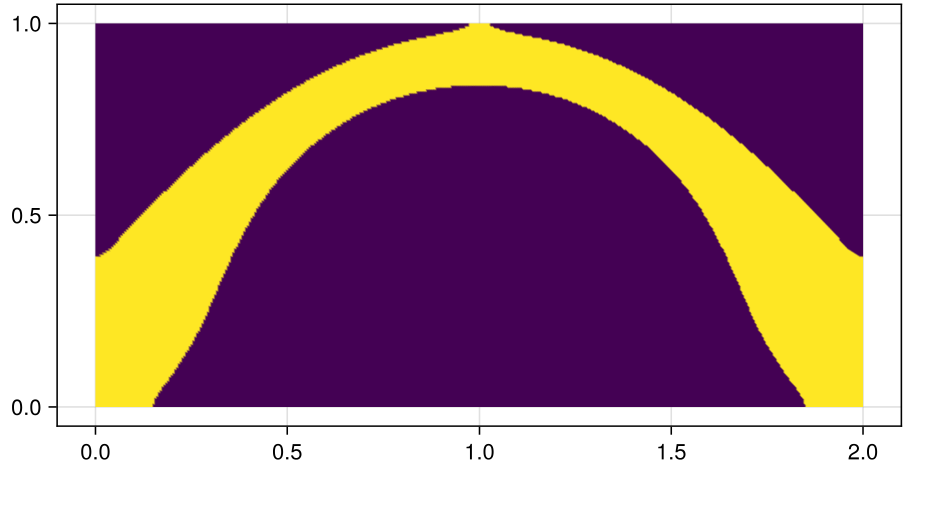}
		\includegraphics[width=0.3\linewidth,trim=1.2cm 18cm 4cm 3cm,clip]{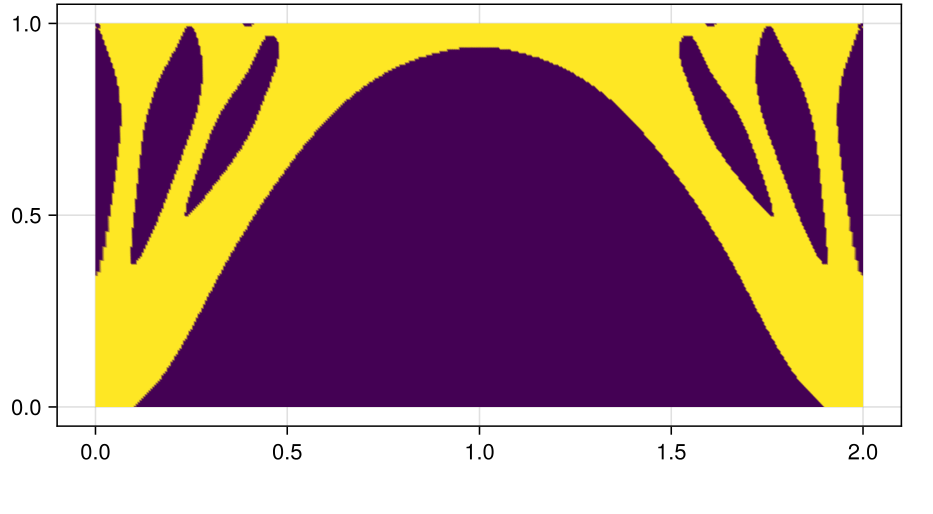}
		\includegraphics[width=0.3\linewidth,trim=1.2cm 18cm 4cm 3cm,clip]{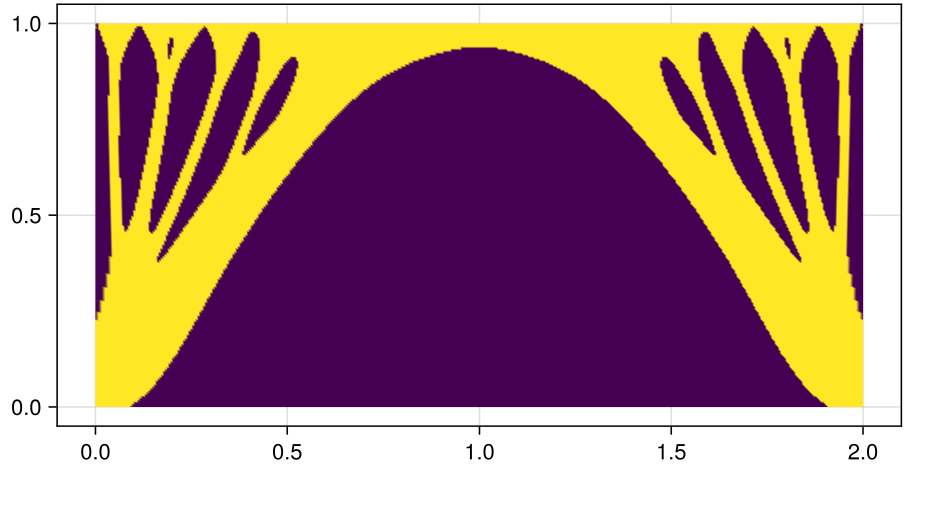}
		\includegraphics[width=0.3\linewidth,trim=2cm 8.5cm 3cm 9.8cm,clip]{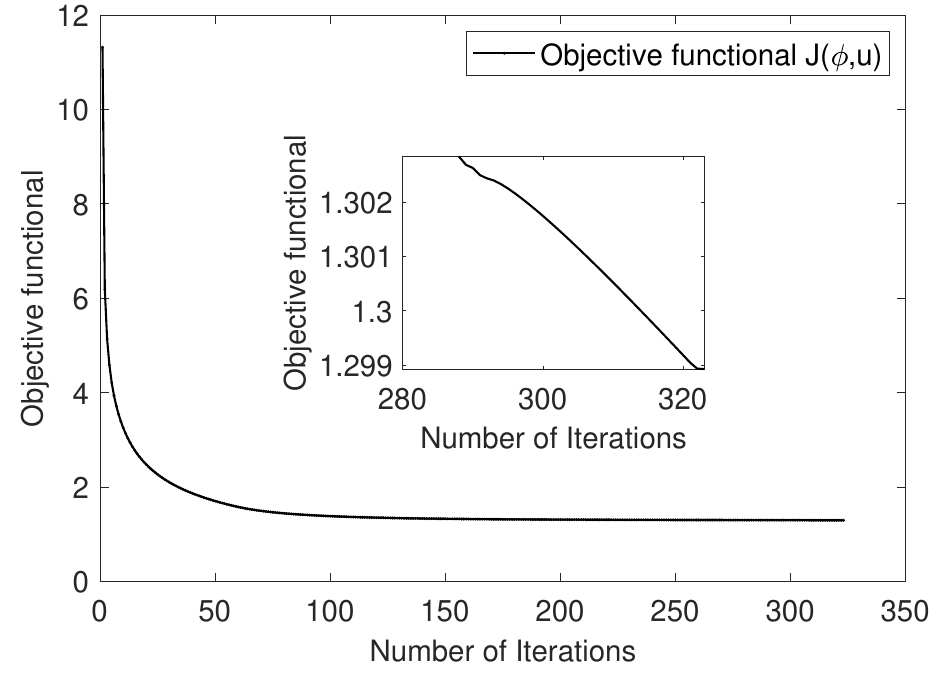}
		\includegraphics[width=0.3\linewidth,trim=2cm 8.5cm 3cm 9.8cm,clip]{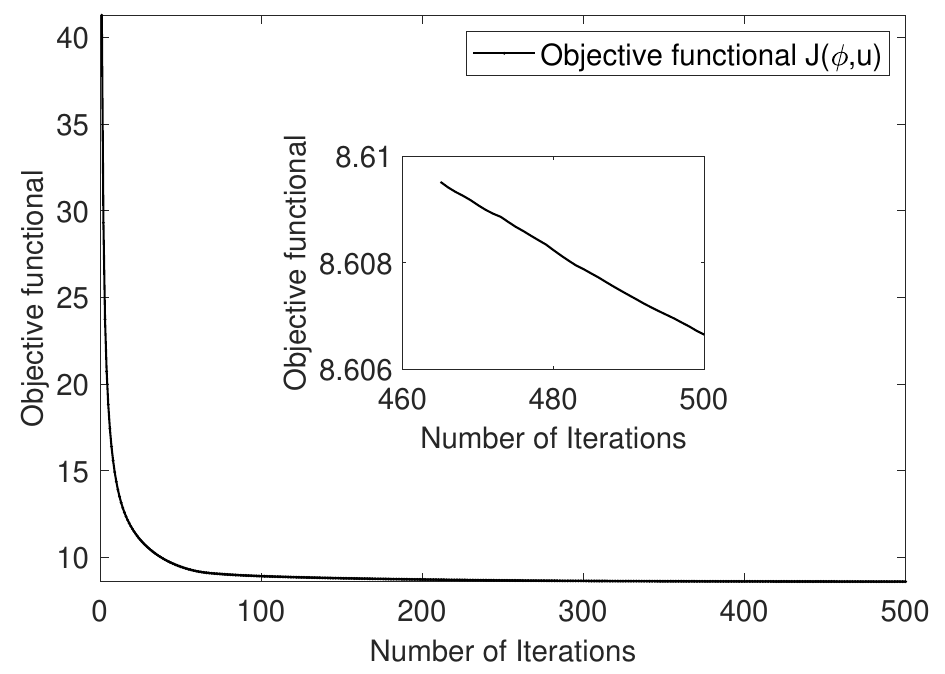}
		\includegraphics[width=0.3\linewidth,trim=2cm 8.5cm 3cm 9.8cm,clip]{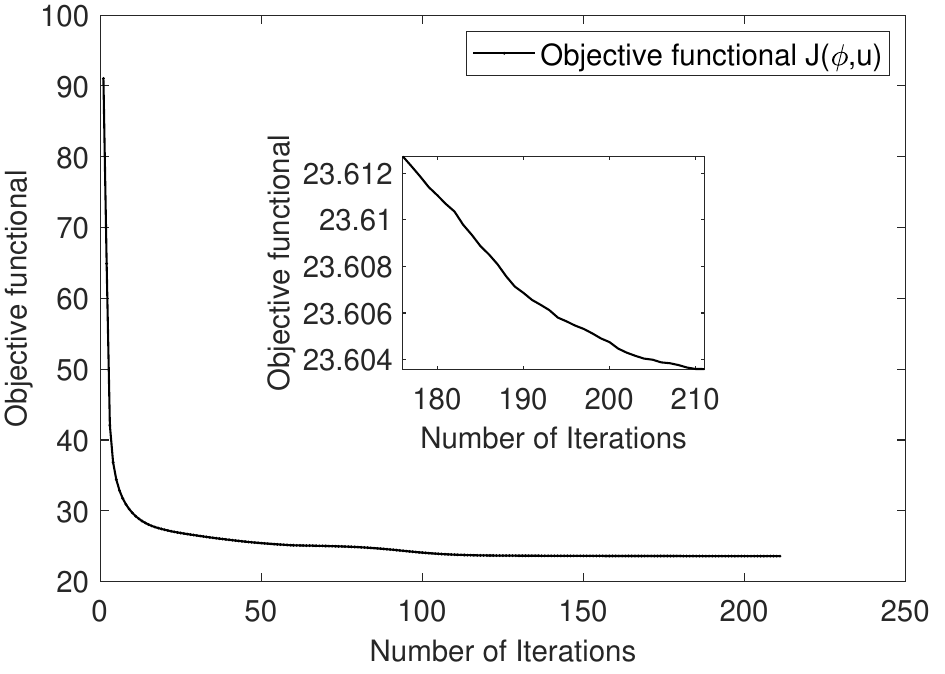}
		\caption{Effects of traction force $\textbf{s}$ on the approximate optimal solutions of $\phi$ and objective functional values on a $400\times 200$ grid with $\gamma = 0.25$, $\beta=0.3$, $\epsilon = 0.002$, $\Delta t = 0.002$, $T=1$. From left to right, $\textbf{s} = (0,0),~(0,-0.5),~(0,-1)$ are used, respectively. See Section~\ref{sec:more}.}
		\label{fig:E5_bodyforce}
	\end{figure}

The algorithm is also applied to the curved boundary region illustrated in Figure \ref{Exa}(e), consisting of two straight edges and two curved edges. Figure \ref{fig:E4_volume} presents the approximate optimal solutions for the phase-field variable $\phi$ under the following parameters: $\epsilon = 0.01$, $\beta = 0.4$, $\gamma = 0.25$ (left) and $0.1$ (right), $\Delta t = 0.01$, and $T = 5$. The initial condition is defined by a uniform random distribution of $\phi$.
	
	\begin{figure}[htbp]
		\centering
		\includegraphics[width=0.3\linewidth,trim=1cm -1.5cm 6cm 0cm,clip]{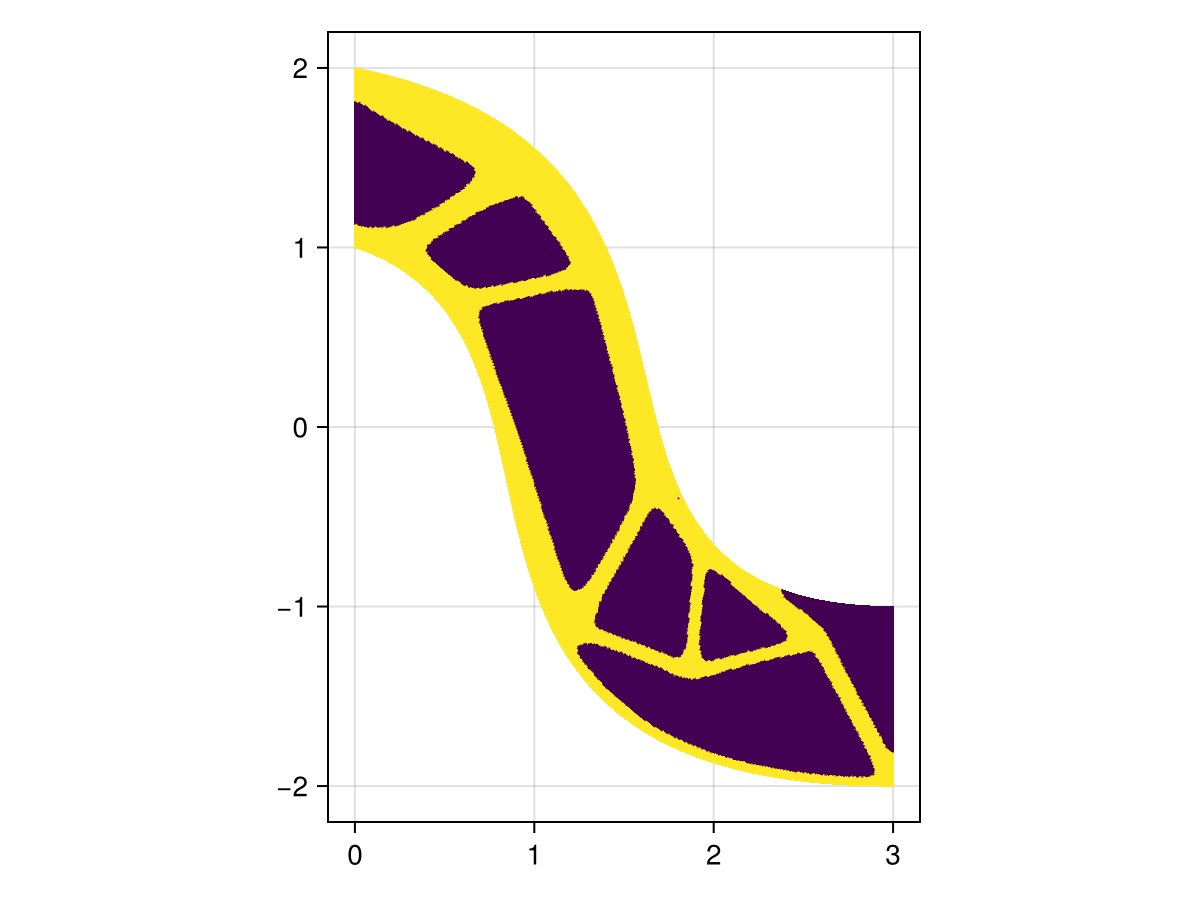}\ \ \ \ \ \ \ \ \ \ 
		\includegraphics[width=0.3\linewidth,trim=1cm -1.5cm 6cm 0cm,clip]{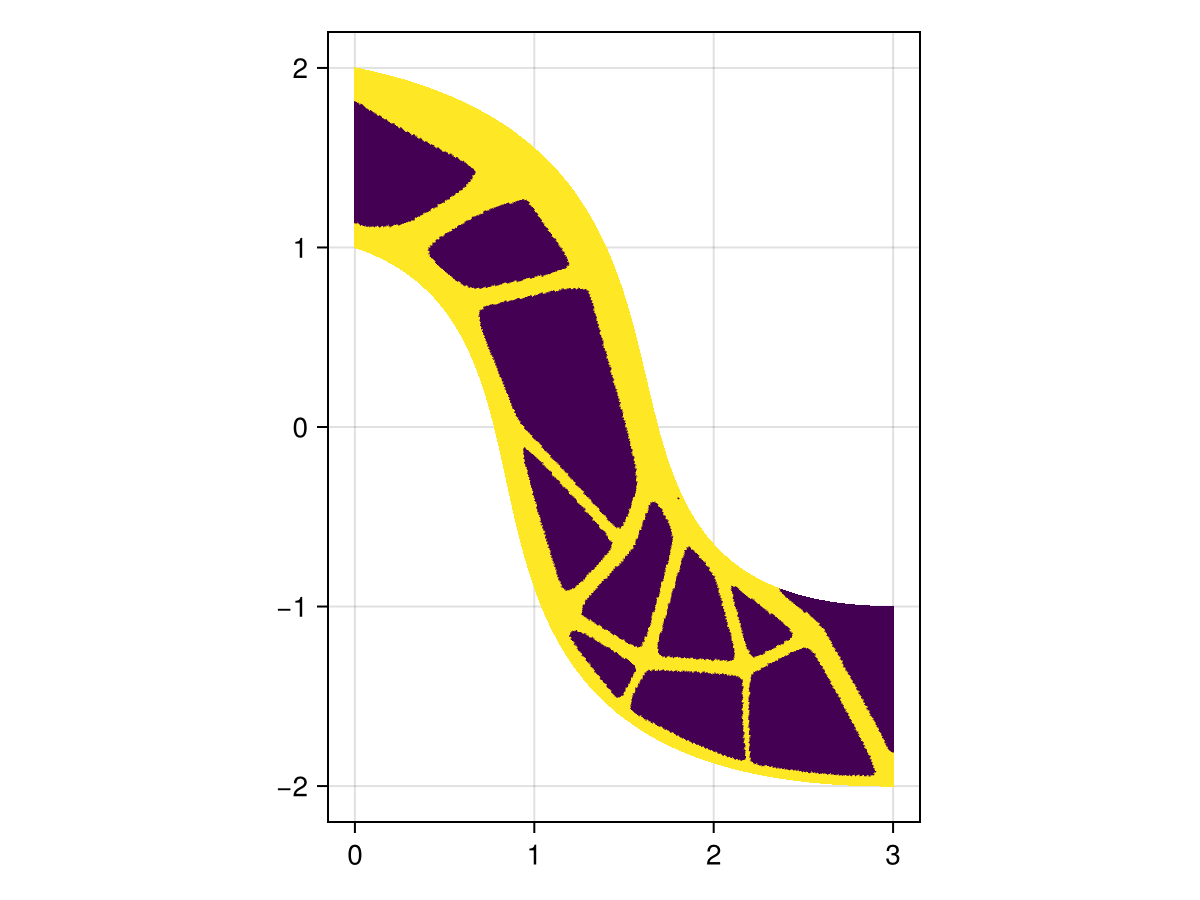}
		\caption{The approximate optimal solutions of $\phi$ with $\epsilon = 0.01$, $\beta = 0.4$ $\Delta t = 0.01$, $T=5$, $\gamma = 0.25$ (left) and $0.1$ (right), and a uniform random initial distribution of $\phi$. See Section~\ref{sec:more}.}
		\label{fig:E4_volume}
	\end{figure}
	
	\subsection{3D examples}\label{E6}
	
	We now present a three-dimensional optimization problem, as illustrated in Figure~\ref{fig:3D}. The model consists of a rectangular cantilever beam clamped on its left side and subjected to a vertical traction force $\mathbf{s} = (0,-1,0)$ applied at the lower right edge. 

The simulations use random initial distributions of  $\phi$ with the following parameters: $\nu = 0.3,~E=1,~\epsilon=0.01,~\Delta t = 0.01, ~T=2$. Figure~\ref{fig:E6} presents three representative results showing the effects of varying the volume fraction $\beta$ and the weighting parameter $\gamma$ on both the optimal phase field solutions and the corresponding objective functional values. The numerical experiments demonstrate behavior consistent with the two-dimensional case: increasing either $\gamma$ or $\beta$ leads to simpler topological configurations, manifested by a reduction in the number of structural branches in the optimal $\phi$ solutions. This dimensional consistency confirms that the observed parameter dependencies are fundamental characteristics of the optimization framework, independent of the spatial dimension being considered.

	\begin{figure}[ht!]
		\centering
		\includegraphics[width=0.3\linewidth]{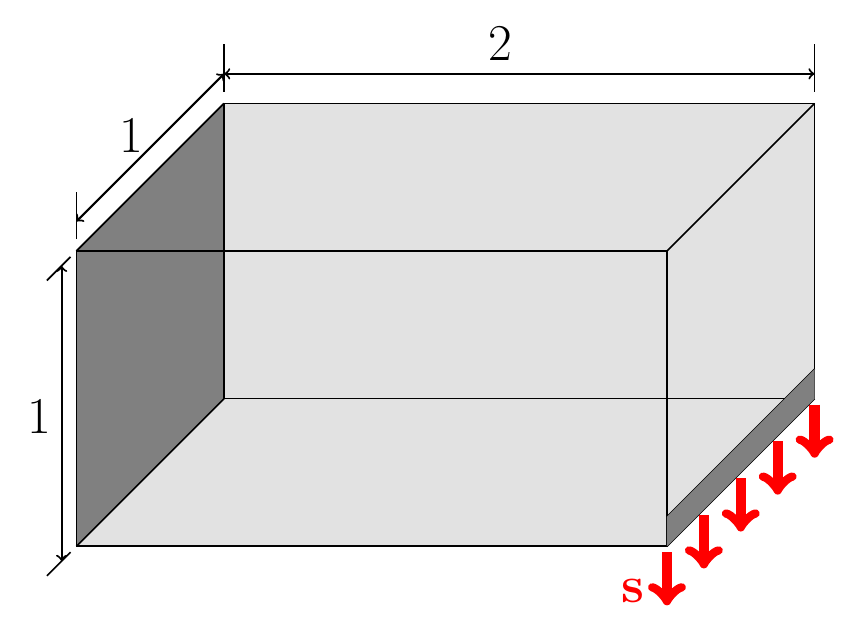}
		\caption{Rectangular cantilever clamped at the left side and loaded at the right by a traction force applied at the lower. See Section \ref{E6}. }
		\label{fig:3D}
	\end{figure}
	
	\begin{figure}[ht!]
		\centering
		\includegraphics[width=0.3\linewidth,trim=1cm 3cm 3cm 0cm,clip]{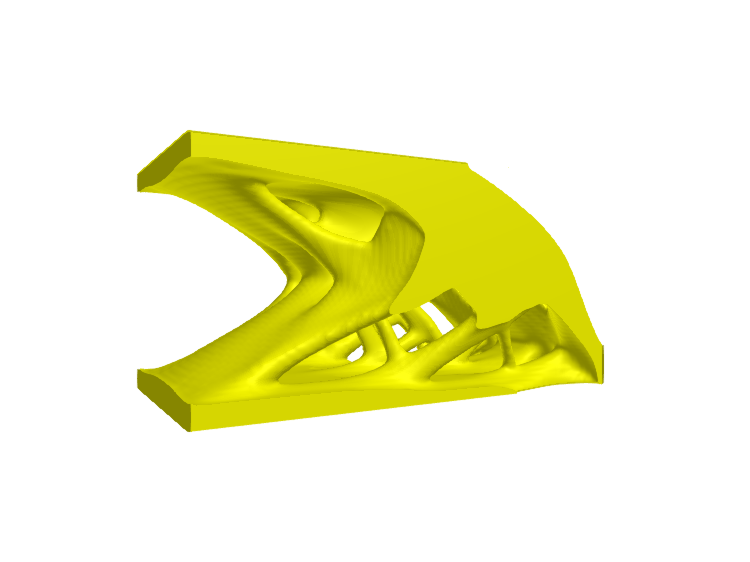}
		\includegraphics[width=0.3\linewidth,trim=1cm 3cm 3cm 0cm,clip]{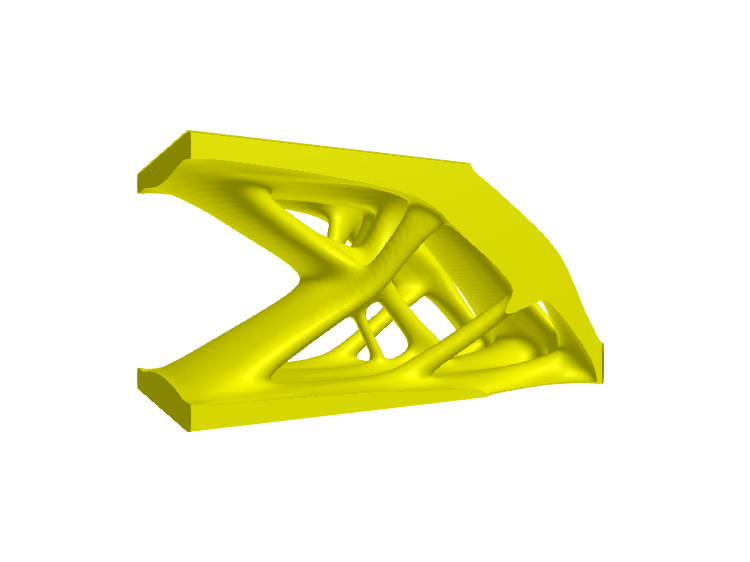}
		\includegraphics[width=0.3\linewidth,trim=1cm 3cm 3cm 0cm,clip]{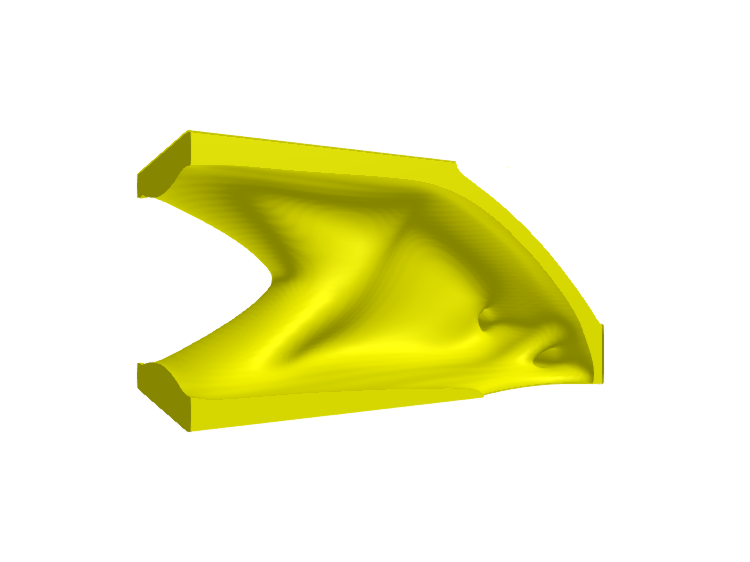}\\
		\includegraphics[width=0.3\linewidth,trim=1cm 8.5cm 3cm 9.8cm,clip]{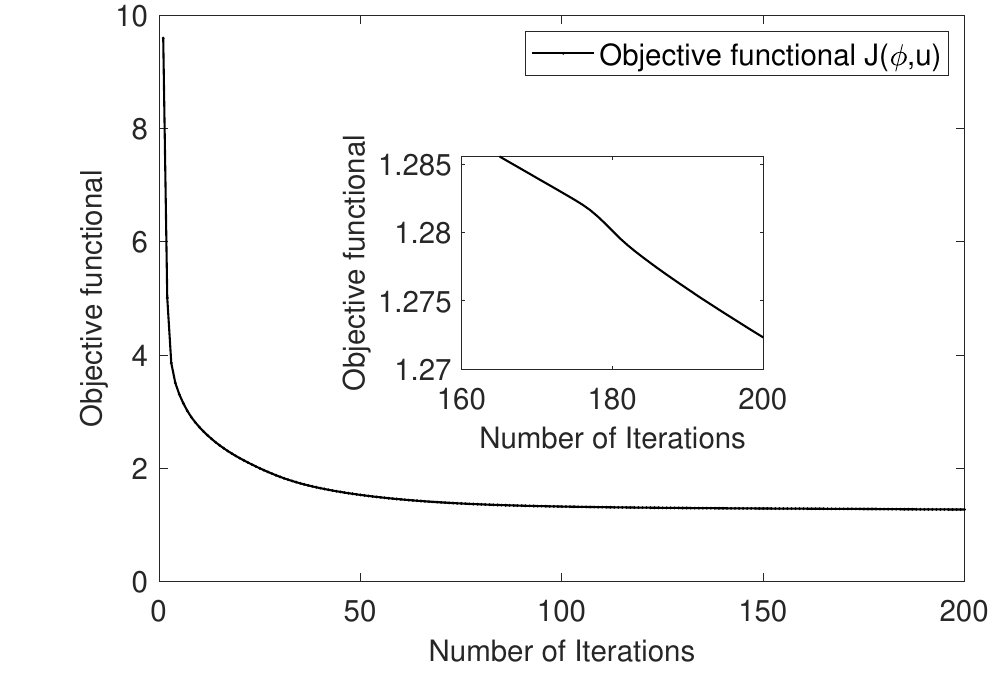}
		\includegraphics[width=0.3\linewidth,trim=1cm 8.5cm 3cm 9.8cm,clip]{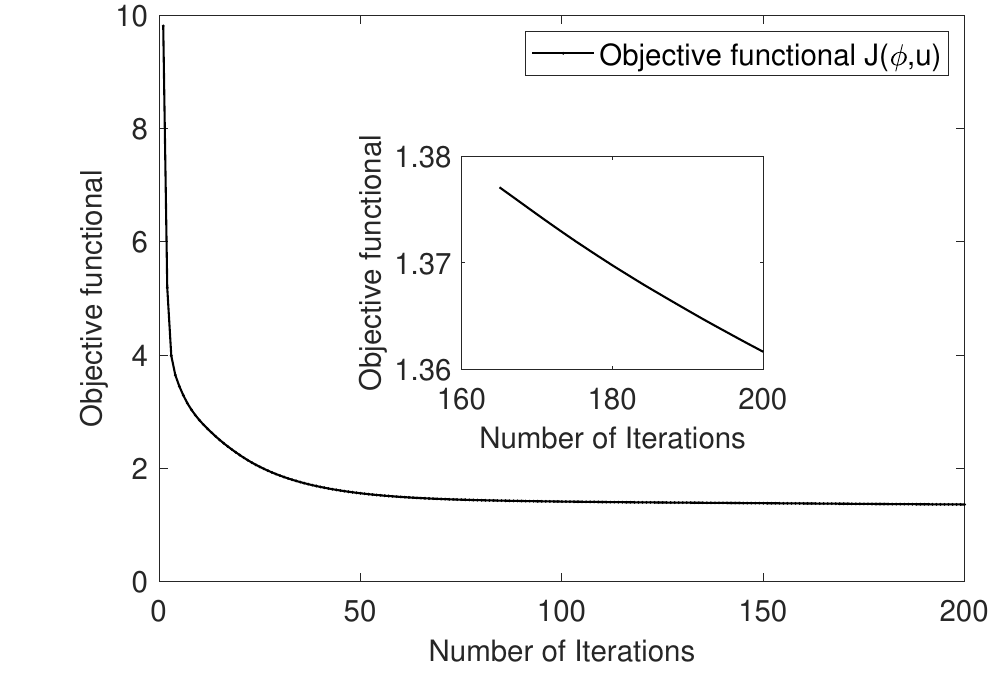}
		\includegraphics[width=0.3\linewidth,trim=1cm 8.5cm 3cm 9.8cm,clip]{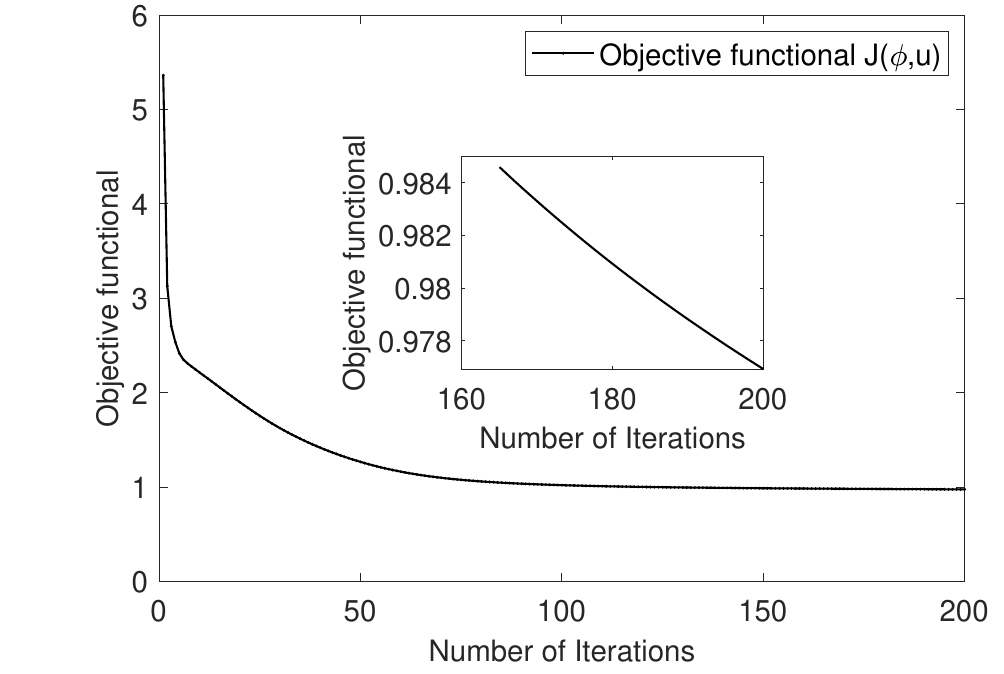}
		\caption{The approximate optimal solutions of $\phi$ and objective functional values. Left, $\gamma=0.1$, $\beta=0.3$; Middle, $\gamma=0.2$, $\beta=0.3$; Right, $\gamma =0.2$, $\beta = 0.4$. See Section \ref{E6}. }
		\label{fig:E6}
	\end{figure}

	\section{Conclusions}\label{sec:include}
	In this work, we have developed a stable phase-field method for topology optimization of minimum compliance problems. The proposed numerical scheme successfully addresses the key challenge of constraint satisfaction while preserving the original optimization objective. Our approach combines three essential components:
\begin{itemize}
\item  A first-order operator splitting method based on Lagrange multipliers for efficient solution of the phase-field equations.
\item A novel limiter mechanism that simultaneously enforces volume constraints and bound-preserving conditions.
\item A stable time discretization that guarantees constraint satisfaction at each iteration.
\end{itemize}

Numerical experiments demonstrate that our method achieves accurate and stable solutions for classical minimum compliance problems. The results confirm the effectiveness of our constraint-preserving approach and its ability to produce physically meaningful optimal designs.

Looking forward, the proposed framework shows significant potential for extension to more complex problems, particularly: multi-material structural topology optimization, fluid-structure interaction problems, nonlinear material response problems. These extensions would further validate the robustness and versatility of our phase-field approach while expanding its range of engineering applications.

	\section*{Acknowledgement}
H. Chen was supported by National Key Research and Development Project of China (Grant No. 2024YFA1012600) and National Natural Science Foundation of China (Grant No. 12471345, 12122115). D. Wang was partially supported by National Natural Science Foundation of China (Grant No. 12422116), Guangdong Basic and Applied Basic Research Foundation (Grant No. 2023A1515012199), Shenzhen Science and Technology Innovation Program (Grant No. RCYX20221008092843046,JCYJ20220530143803007).  X.-P. Wang was partially supported by National Natural Science Foundation of China (Grant No. 12271461, 12426307) and Shenzhen Science and Technology Innovation Program (Grant No. 2024SC0020). D. Wang and X.-P. Wang were also supported by Guangdong Provincial Key Laboratory of Mathematical Foundations for Artificial Intelligence (2023B1212010001), and Hetao Shenzhen-Hong Kong Science and Technology  Innovation Cooperation Zone Project (No.HZQSWS-KCCYB-2024016).
	
	\bibliographystyle{plainnat}
	\bibliography{sample}

\end{document}